\renewcommand{\leq}{\leqslant}
\renewcommand{\geq}{\geqslant}
\newcommand{\ptl}{\partial}
\newcommand{\rr}{{\mathbb{R}}}
\newcommand{\la}{\lambda}
\newcommand{\h}{\mathcal{H}}
\newcommand{\nn}{\mathbb{N}}
\newcommand{\sub}{\subset}
\newcommand{\subeq}{\subseteq}
\newcommand{\escpr}[1]{\big<#1\big>}
\newcommand{\Sg}{\Sigma} \newcommand{\sg}{\sigma}
\newcommand{\Om}{\Omega}
\newcommand{\eps}{\varepsilon}
\newcommand{\var}{\varphi}
\newcommand{\ga}{\gamma}
\newcommand{\Ga}{\Gamma}
\newcommand{\mnh}{|N_{h}|}
\newcommand{\nuh}{\nu_{h}}
\newcommand{\m}{\mathbb{N}(\kappa)}
\newcommand{\e}{\mathbb{M}(\kappa)}
\newcommand{\stres}{\mathbb{S}^3}
\newcommand{\mm}{\mathbb{M}}
\newcommand{\s}{\mathbb{S}}
\newcommand{\ele}{\mathcal{L}}
\newcommand{\cmula}{\mathcal{C}_{\lambda}}
\newcommand{\pla}{\mathcal{P}_\lambda(p)}
\newcommand{\sla}{\mathcal{S}_\la}
\DeclareMathOperator{\divv}{div}
\newtheorem{theorem}{Theorem}[section]
\newtheorem{proposition}[theorem]{Proposition}
\newtheorem{lemma}[theorem]{Lemma}
\newtheorem{corollary}[theorem]{Corollary}
\theoremstyle{definition}
\newtheorem{remark}[theorem]{Remark}
\newtheorem{remarks}[theorem]{Remarks}
\newtheorem{example}[theorem]{Example}
\theoremstyle{remark}
\numberwithin{equation}{section}
\begin{document}

\bibliographystyle{amsplain}

\title[Strongly stable surfaces in sub-Riemannian $3$-space forms]
{Strongly stable surfaces in sub-Riemannian $3$-space forms}

\author[A.~Hurtado]{Ana Hurtado}
\address{Departamento de Geometr\'{\i}a y Topolog\'{\i}a \\
Universidad de Granada \\ E--18071 Granada \\ Spain}
\email{ahurtado@ugr.es}

\author[C.~Rosales]{C\'esar Rosales}
\address{Departamento de Geometr\'{\i}a y Topolog\'{\i}a \\
Universidad de Granada \\ E--18071 Granada \\ Spain}
\email{crosales@ugr.es}

\date{October 11, 2016}

\thanks{The authors have been supported by Mineco-Feder grant MTM2013-48371-C2-1-P and Junta de Andaluc\'ia grant FQM-325. The second author was also supported by the grant PYR-2014-23 of the GENIL program of CEI BioTic GRANADA}

\subjclass[2000]{53C17, 53C42}

\keywords{Sub-Riemannian space forms, second variation formula, stability, Bernstein problem}

\begin{abstract}
A surface of constant mean curvature (CMC) equal to $H$ in a sub-Riemannian $3$-manifold is strongly stable if it minimizes the functional $\text{area}+2H\,\text{volume}$ up to second order. In this paper we obtain some criteria ensuring strong stability of surfaces in Sasakian $3$-manifolds. We also produce new examples of $C^1$ complete CMC surfaces with empty singular set in the sub-Riemannian $3$-space forms by studying those ones containing a vertical line. As a consequence, we are able to find complete strongly stable non-vertical surfaces with empty singular set in the sub-Riemannian hyperbolic $3$-space $\mm(-1)$. In relation to the Bernstein problem in $\mm(-1)$ we discover strongly stable $C^\infty$ entire minimal graphs in $\mm(-1)$ different from vertical planes. These examples are in clear contrast with the situation in the first Heisenberg group, where complete strongly stable surfaces with empty singular set are vertical planes. Finally, we analyze the strong stability of CMC surfaces of class $C^2$ and non-empty singular set in the sub-Riemannian $3$-space forms. When these surfaces have isolated singular points we deduce their strong stability even for variations moving the singular set.
\end{abstract}

\maketitle

\thispagestyle{empty}

\section{Introduction}
\label{sec:intro}
\setcounter{equation}{0}

Let $M$ be a Sasakian sub-Riemannian $3$-manifold (to be defined in Section~\ref{subsec:ssR3m}). From the first variation formulas, see for instance \cite[Sect.~4.1]{hr2}, a surface $\Sg$ in $M$ with $\ptl\Sg=\emptyset$ which is a critical point of the (sub-Riemannian) area $A$ for any variation preserving the associated volume $V$ has constant mean curvature $H$ in the sense of \eqref{eq:mc2}. From here, it is easy to deduce that $\Sg$ satisfies $(A+2HV)'(0)=0$ for any variation. Following standard terminology we will say that $\Sg$ is \emph{strongly stable} if, furthermore, we have $(A+2HV)''(0)\geq 0$ for any variation. In particular, it is clear that $A''(0)\geq 0$ under volume-preserving variations. In the minimal case ($H=0$) the strong stability is the classical condition that $A''(0)\geq 0$ for any variation.

In recent years constant mean curvature (CMC) surfaces, stability properties and Bernstein type problems have been extensively investigated in sub-Riemannian manifolds. The present paper aims to study these topics inside Sasakian $3$-manifolds, focusing on the simplest and most symmetric ones: the \emph{space forms}, defined in Section~\ref{subsec:model} as complete Sasakian sub-Riemannian $3$-manifolds of constant Webster scalar curvature $\kappa$. In the simply connected case, a result of Tanno~\cite{tanno} establishes that the space form $\e$ is, up to isometries, the first Heisenberg group $\mathbb{H}^1$ for $\kappa=0$, the group of unit quaternions $\mathbb{S}^3\sub\rr^4$ for $\kappa=1$, and the universal cover of the special linear group $\text{SL}(2,\rr)$ for $\kappa=-1$. As the authors showed in \cite[Sect.~2.2]{hr2}, standard arguments in Riemannian geometry produce $3$-dimensional space forms with non-trivial topology.

The analysis of the stability condition requires an explicit expression for $(A+2HV)''(0)$. The second variation of the sub-Riemannian area has appeared in several contexts, see for instance \cite{chmy}, \cite{bscv}, \cite{dgn}, \cite{montefalcone}, \cite{mscv}, \cite{hrr}, \cite{ch2}, \cite{hp2}, \cite{galli}, \cite{montefalcone2} and \cite{galli-ritore3}. The computation of $A''(0)$ by differentiation under the integral sign in \eqref{eq:area} involves a technical problem since the deformation could move the \emph{singular set} $\Sg_0$, which consists of the points in $\Sg$ where the integrand $\mnh$ vanishes. In \cite{hr2} we solved this difficulty by assuming the employed deformations to be \emph{admissible}, which allows to apply the Leibniz's rule for differentiating under the integral sign. For a CMC surface $\Sg$ inside a a Sasakian sub-Riemannian $3$-manifold $M$, we were able to prove that
\begin{equation}
\label{eq:intro1}
(A+2HV)''(0)=\mathcal{Q}(w,w)+\int_\Sg\divv_\Sg G\,da,
\end{equation}
for any admissible variation under suitable integrability conditions. In the previous formula, $w$ denotes the normal component of the velocity vector field associated to the variation, $\divv_\Sg G$ is the divergence relative to $\Sg$ of a certain tangent vector field $G$ along $\Sg-\Sg_0$, see \cite[Eq.~(7.1)]{hr2}, and $\mathcal{Q}$ is the (sub-Riemannian) \emph{index form} of $\Sg$ defined in \eqref{eq:indexform}. For the particular case of variations supported on $\Sg-\Sg_0$ the divergence term vanishes, and we can use integration by parts to deduce
\[
(A+2HV)''(0)=-\int_{\Sg} w\,\mathcal{L}(w)\,da,
 \]
where $\mathcal{L}$ is the second order differential operator given in \eqref{eq:lu}. This operator plays in our setting the same role as the \emph{Jacobi operator} introduced by Barbosa, do Carmo and Eschenburg~\cite{bdce} for CMC hypersurfaces in Riemannian manifolds. By analogy with the Riemannian situation we define a (sub-Riemannian) \emph{Jacobi function} on $\Sg$ as a function $\psi\in C^2(\Sg)$ for which $\ele(\psi)=0$.

The second variation formula provides a bridge between the stability properties of $\Sg$ and the operator $\ele$. There is a vast literature exploring this connection in the Riemannian context. A relevant result in this line is a theorem of Fischer-Colbrie and Schoen \cite{fcs} asserting that a CMC hypersurface having a positive Jacobi function is strongly stable. Recently, Montefalcone~\cite{montefalcone2} has derived a sub-Riemannian counterpart of this theorem for minimal hypersurfaces with empty singular set in Carnot groups. In Section~\ref{sec:statstable} of the present paper we provide a similar stability criterion for a CMC surface $\Sg$ with $\Sg_0=\emptyset$ in a Sasakian sub-Riemannian $3$-manifold. Indeed, from the second variation formula \eqref{eq:intro1} and the expression in equation \eqref{eq:sorpasso} for the index form $\mathcal{Q}$, we show in Theorem~\ref{th:criterion} strong stability of $\Sg$ provided there is a nowhere vanishing function $\psi\in C^2(\Sg)$ such that $\psi\,\ele(\psi)\leq 0$. Since the normal component of the Reeb vector field $T$ in $M$ is always a Jacobi function (Lemma~\ref{lem:jacobi}) we deduce in Corollary~\ref{cor:criteria} that, if $\Sg$ does not contain vertical points (those where $T$ is tangent to $\Sg$), then $\Sg$ is strongly stable. Moreover, an immediate application of Theorem~\ref{th:criterion} yields strong stability of $\Sg$ whenever the function $\mnh$ satisfies $\ele(\mnh)\leq 0$.  It is worth mentioning that the relation between $\ele(\mnh)$ and the stability properties of minimal surfaces has been investigated in several works, see \cite{dgnp}, \cite{hrr}, \cite{dgnp-stable}, \cite{rosales}, \cite{galli}, \cite{galli2} and \cite{galli-ritore3}.

In Section~\ref{sec:helicoids} we study complete CMC surfaces with empty singular set in the model spaces $\e$ (though our construction and results can be extended to arbitrary sub-Riemannian $3$-space forms). The existence of smooth stable examples in $\e$ is very restrictive due to some rigidity results that we know summarize. In the Heisenberg group $\mm(0)$ any strongly stable minimal surface $\Sg$ with $\Sg_0=\emptyset$ is a vertical plane. By assuming $C^2$ regularity of $\Sg$ this was proved in \cite{hrr} and \cite{dgnp-stable} after some partial characterizations in \cite{bscv} and \cite{dgnp}. In the $C^1$ case the statement has been recently obtained by Galli and Ritor\'e \cite{galli-ritore3}. On the other hand, complete stable CMC surfaces of class $C^2$ and empty singular set in $\e$ were analyzed by the second author in \cite{rosales}. More precisely, if $\Sg$ is such a surface, then either $\kappa=0$ and $\Sg$ is a vertical plane, or $\kappa=-1$ and the mean curvature $H$ of $\Sg$ satisfies $H^2\leq 1$. Moreover, in the extremal case $H^2=1$, the surface $\Sg$ must be a vertical horocylinder. This result suggests that the stability condition is much less restrictive in the hyperbolic model $\mm(-1)$ whenever $0\leq H^2<1$. Motivated by this fact, the second author conjectured in \cite[Re.~6.10]{rosales} the existence of complete stable non-vertical CMC surfaces in $\mm(-1)$ having empty singular set and mean curvature $H\in[0,1)$. In Section~\ref{sec:helicoids} we show existence of a continuum of such surfaces by means of a geometric construction which provides, at the same time, new examples of complete CMC surfaces with empty singular set in $\e$.

Let us motivate and explain our construction in more detail. Take a complete vertical surface $\Sg$ in $\e$ of constant mean curvature $H$. By the characterization result in \cite[Prop.~4.5]{rosales} the surface $\Sg$ is generated by some CC-geodesic $\ga$ of curvature $H$ in $\e$ by means of vertical translations. Moreover, after some computations we get $\ele(\mnh)=4(H^2+\kappa)$ along $\Sg$. Hence, our stability criterion in Corollary~\ref{cor:criteria} (iii) implies that $\Sg$ is strongly stable provided $H^2+\kappa\leq 0$. Now, the idea is to produce a deformation $\Sg'$ of $\Sg$ when $H^2+\kappa<0$, in such a way that $\Sg'$ has empty singular set (though it is possibly non vertical) and still satisfies $\ele(\mnh)\leq 0$. Observe that a natural deformation of $\Sg$ arises when an arbitrary one-parameter family of vertical screw motions of $\e$ is acting on the generating CC-geodesic $\ga$. In an equivalent way, we are led to introduce in \eqref{eq:sgla} the sets $\Sg_{\la,\sg}$ obtained when we leave orthogonally from the vertical axis of $\e$ by CC-geodesics of the same curvature $\la$ and initial velocity determined by a $C^n$ angle function $\sg$ with $n\geq 1$. We must remark that the sets $\Sg_{0,\sg}$ in $\mm(0)$ were previously studied by Cheng and Hwang~\cite{ch} in their classification of properly embedded minimal surfaces of helicoid type. Moreover, when $\cos\sg\neq 0$, the sets $\Sg_{0,\sg}$ coincide with the entire graphical strips introduced by Danielli, Garofalo, Nhieu and Pauls \cite{dgnp} when they solved the Bernstein problem in $\mm(0)$ for $C^2$ graphs with empty singular set.

In Theorem~\ref{th:main1} we employ CC-Jacobi fields to prove some properties of $\Sg_{\la,\sg}$. We analyze when these sets provide immersed surfaces with empty singular set. In such cases, we show that $\Sg_{\la,\sg}$ is complete, orientable, and has constant mean curvature $\la$. By the aforementioned rigidity results none of these surfaces is strongly stable in $\e$, $\kappa\geq 0$, unless $\kappa=0$, $\la=0$ and $\sg$ is constant. Nevertheless, in the hyperbolic model $\mm(-1)$ we are able to give a sufficient condition on the angle function $\sg$ ensuring that $\ele(\mnh)\leq 0$, which ensures by Corollary~\ref{cor:criteria} (iii) that $\Sg_{\la,\sg}$ is strongly stable. This allows us to deduce in Example~\ref{ex:minhyper} that, for any $H\in[0,1)$, there is a complete strongly stable non-vertical surface $\Sg$ in $\mm(-1)$ with $\Sg_0=\emptyset$ and constant mean curvature $H$. Indeed, an embedded right handed helicoid is a minimal example in the previous conditions. We would like to emphasize that the existence of these examples is in clear contrast with the situation in $\mm(0)$, where previous results guarantee that any complete strongly stable surface $\Sg$ with $\Sg_0=\emptyset$ is a vertical plane. It is also interesting to observe that the solution to the Bernstein problem in $\mm(0)$ given in \cite{dgnp} has no direct counterpart in $\mm(-1)$. We illustrate this fact in Example~\ref{ex:bernstein}, since the surface $y=xt$ is a strongly stable entire minimal graph with empty singular set and different from a vertical plane.

The family $\Sg_{\la,\sg}$ contains new examples of complete CMC surfaces with empty singular set in $\e$. We must point out that the classification of such surfaces is far from being established. In $\mathbb{M}(0)$ some partial results were obtained for minimal surfaces, \cite{ch}, \cite{bscv}, and for CMC surfaces of revolution \cite{rr1}. On the other hand, complete CMC vertical surfaces in Sasakian sub-Riemannian $3$-manifolds were described in \cite{rosales}. Following this line, in Theorem~\ref{th:main1} (vi) we establish that any $C^1$ complete CMC surface in $\e$ having empty singular set and containing a vertical line must be congruent to some surface $\Sg_{\la,\sg}$.

In the sub-Riemannian $3$-sphere $\mm(1)$ the study of compact CMC surfaces is particularly interesting. As a consequence of \cite[Thm.~E]{chmy}, we know that a compact CMC surface $\Sg$ with $\Sg_0=\emptyset$ is topologically a torus. As the authors proved in \cite{hr1}, if we further assume the mean curvature $H$ to satisfy $H/\sqrt{1+H^2}\in\rr\setminus\mathbb{Q}$, then $\Sg$ is congruent to a vertical Clifford torus. This result does not hold if $H/\sqrt{1+H^2}\in\mathbb{Q}$, as it is illustrated by the rotationally symmetric examples classified in \cite{hr1}. In Example~\ref{ex:newtori} of the present paper we find CMC tori in the family $\Sg_{\la,\sg}$ which are neither rotationally symmetric with respect to the vertical axis nor congruent to a vertical Clifford torus. This fact might suggest that the family of CMC tori with empty singular set in $\mm(1)$ is very large. However, as we showed in \cite{rosales} none of these tori is stable.

In Section~\ref{sec:nonempty} we discuss stability properties for the class of complete volume-preserving area-stationary surfaces with non-empty singular set in sub-Riemannian $3$-space forms. In the Heisenberg group $\mm(0)$ it is possible to find many examples of area-minimizing surfaces with low analytical regularity, see for instance \cite{pauls-regularity}, \cite{chy}, \cite{mscv} and \cite{r2}. However, by assuming $C^2$ regularity the situation is considerably more rigid. Indeed, any $C^2$ surface $\Sg$ in the class above can be fully described in terms of the singular set $\Sg_0$ and the mean curvature $H=\la$. From the work of Cheng, Hwang, Malchiodi and Yang~\cite{chmy}, the set $\Sg_0$ can contain isolated points and/or $C^1$ curves with non-vanishing tangent vector. We will analyze the two cases separately.

If there is an isolated point in $\Sg_0$ then we have by \cite{hr2} that $\Sg$ is congruent either to a planar surface $\pla$ as in \eqref{eq:planes}, or to a spherical surface $\sla(p)$ as in \eqref{eq:spheres}. The spheres $\sla(p)$ are generalizations of the Pansu spheres in the Heisenberg group $\mm(0)$. In \cite{hr2} we proved that, though $\sla(p)$ is not strongly stable, it is a second order minima of the area under volume-preserving admissible variations. In the present paper we complete the stability question in this setting by showing in Theorem~\ref{th:stabilitypla} that the planes $\pla$ are \emph{strictly stable}. This means that $(A+2\la V)''(0)\geq 0$ for any admissible variation of $\pla$, and equality holds if and only if the associated velocity vector field is always tangent to $\pla$. We remark that employing admissible variations is necessary to apply the second variation formula \eqref{eq:intro1} since we do not require the deformations to fix the singular point $p$. In fact, due to the integrability condition in Lemma~\ref{lem:comppla} (i), the family of admissible variations of $\pla$ moving $p$ is very large, see \cite[App.~B]{hr2}. It is worth mentioning that the stability of the minimal plane $\mathcal{P}_0(p)$ in $\mm(0)$ is very well known; as a matter of fact $\mathcal{P}_0(p)$ is area-minimizing by a calibration argument, see \cite{rr2} and \cite{bscv}. From a similar reasoning we may conclude the stability of the minimal planes $\mathcal{P}_0(p)$ in the hyperbolic model $\mm(-1)$. We stress that Theorem~\ref{th:stabilitypla} is valid for all planes $\pla$ in sub-Riemannian $3$-space forms of arbitrary topology.

The proof of Theorem~\ref{th:stabilitypla} has two steps. In the first one, we employ the same arguments as for the spheres $\sla(p)$ in \cite{hr2} to infer that the divergence term in \eqref{eq:intro1} vanishes. In the second step we see that $\mathcal{Q}(w,w)\geq 0$ for any $w\in C_0^1(\pla)$, with equality if and only if $w=0$. Observe that, since the planes $\pla$ have no vertical points, Corollary~\ref{cor:criteria} (i) gives inequality $\mathcal{Q}(w,w)\geq 0$ provided the function $w$ vanishes off of the pole $p$. The proof that $\mathcal{Q}(w,w)\geq 0$ for any $w\in C_0^1(\pla)$ is more technical and relies on Lemma~\ref{lem:newibp}, where we use the analytical behaviour of $\pla$ around $p$ to deduce an integration by parts formula involving the Jacobi function $\psi=\escpr{N,T}$. From Lemma~\ref{lem:newibp} we can obtain Theorem~\ref{th:stabilitypla} just by reproducing the proof of Theorem~\ref{th:criterion}.

On the other hand, if the singular set $\Sg_0$ contains a curve, then there is an ambient CC-geodesic $\Ga$ such that $\Sg$ is congruent to the surface $\mathcal{C}_{\la}(\Ga)$ defined in \cite[Thm.~4.13]{hr2}, see also the refe\-rences therein. Roughly speaking, the surfaces $\cmula(\Ga)$ are produced by matching together some ``fundamental pieces" in a suitable way. These pieces are surfaces $\Sg_\la(\Ga)$ as in \eqref{eq:sglaGa}, and they are obtained when one leaves from $\Ga$ by a family of orthogonal CC-geodesic segments of a given curvature and length.

In general, we cannot expect the surfaces $\cmula(\Ga)$ to be strongly stable. Consider the simplest case of a CC-geodesic $\Ga$ of curvature $\mu$ in the Heisenberg group $\mm(0)$. If $\mu=0$ then $\mathcal{C}_0(\Ga)$ is congruent to the hyperbolic paraboloid $t=xy$, and it is strongly stable (indeed area minimizing), see \cite{rr2}. In the case $\mu\neq 0$, Ritor\'e and the authors~\cite{hrr} found a variation of $\mathcal{C}_{0}(\Ga)$ moving the two singular curves of $\mathcal{C}_0(\Ga)$ while strictly decreasing the area. This leads us to study the stability of $\cmula(\Ga)$ under variations supported off of the singular set. Surprisingly, in Theorem~\ref{th:cmulastability} we are able to deduce, as an immediate consequence of Corollary~\ref{cor:criteria} (iii), that the regular set of any surface $\cmula(\Ga)$ within a $3$-dimensional space form is strongly stable. This implies in particular that, in order to show instability of $\cmula(\Ga)$, one needs to use suitable variations moving the singular set.

The paper is organized into five sections. Section~\ref{sec:preliminaries} contains background material about sub-Riemannian $3$-manifolds. In Section~\ref{sec:statstable} we introduce the variational setting and prove our stability criteria for CMC surfaces in Sasakian sub-Riemannian $3$-manifolds. In Section~\ref{sec:helicoids} we construct and classify complete CMC surfaces in $\e$ containing a vertical line, focusing on strongly stable examples in $\mm(-1)$. Finally, in Section~\ref{sec:nonempty} we establish our stability results for the surfaces $\mathcal{P}_\la(p)$ and $\mathcal{C}_{\la}(\Ga)$.

\section{Preliminaries}
\label{sec:preliminaries}
\setcounter{equation}{0}

In this section we introduce the notation and gather some known results that will be used throughout the paper.

\subsection{Sasakian sub-Riemannian $3$-manifolds}
\label{subsec:ssR3m}
\noindent

A \emph{contact sub-Riemannian manifold} is a connected manifold $M$ with $\ptl M=\emptyset$ together with a Riemannian metric $g_h$ defined on an oriented contact distribution $\h$, which we refer to as the \emph{horizontal distribution}. A vector field $U$ is \emph{horizontal} if it coincides with its projection $U_h$ onto $\h$.

The \emph{normalized form} is the contact $1$-form $\eta$ on $M$ such that $\text{Ker}(\eta)=\h$ and the restriction of the $2$-form $d\eta$ to $\h$ equals the area form on $\h$. We will consider the orientation of $M$ induced by $\eta\wedge d\eta$. The \emph{Reeb vector field} is the vector field $T$ transversal to $\h$ defined by $\eta(T)=1$ and $d\eta(T,U)=0$, for any $U$. If $U$ is always proportional to $T$ then we say that $U$ is \emph{vertical}.

We denote by $J$ the orientation-preserving $90$ degree rotation in $(\h,g_h)$. This is a contact structure on $\h$ since $J^2=-\text{Id}$. We extend $J$ to the tangent bundle of $M$ by setting $J(T):=0$.

The \emph{canonical extension} of $g_h$ is the Riemannian metric $g=\escpr{\cdot\,,\cdot}$ on $M$ extending $g_h$, and such that $T$ is a unit vector field orthogonal to $\h$. The \emph{length} of a vector field $U$ is $|U|:=\escpr{U,U}^{1/2}$.
We say that $M$ is \emph{complete} if $(M,g)$ is a complete Riemannian manifold.

By an \emph{isometry} of $M$ we mean a $C^\infty$ diffeomorphism $\phi:M\to M$ whose differential at any $p\in M$ is an orientation-preserving linear isometry from $\h_p$ to $\h_{\phi(p)}$. We say that $M$ is \emph{homogeneous} if the group $\text{Iso}(M)$ of isometries of $M$ acts transitively on $M$. In such a case $M$ must be complete. Two subsets $S_1$ and $S_2$ of $M$ are \emph{congruent} if there is $\phi\in\text{Iso}(M)$ such that $\phi(S_1)=S_2$.

By a \emph{Sasakian sub-Riemannian $3$-manifold} we mean a contact sub-Riemannian $3$-manifold $M$ such that $g_h$ is a \emph{Sasakian metric}, i.e., any diffeomorphism of the one-parameter group of $T$ is an isometry. This implies that $(M,g)$ is a K-contact Riemannian manifold \cite[Sect.~6.2]{blair}. It follows from \cite[p.~67, Cor.~6.5, Thm.~6.3]{blair} that the Levi-Civit\`a connection $D$ associated to $g$ satisfies
\begin{align}
\nonumber
D_UT&=J(U),
\\
\label{eq:dujv}
D_U\left(J(V)\right)&=J(D_UV)+\escpr{V,T}\,U-\escpr{U,V}\,T.
\end{align}
In particular, the integral curves of $T$ are geodesics in $(M,g)$ parameterized by arc-length. We refer to these curves as \emph{vertical lines}.

The \emph{Webster scalar curvature} $K$ of a contact sub-Riemannian $3$-manifold $M$ is the sectional curvature of $\h$ with respect to the Tanaka connection \cite[Sect.~10.4]{blair}. If $M$ is Sasakian then we have $K=(1/4)\,(K_h+3)$, where $K_h$ denotes the sectional curvature of $\h$ in $(M,g)$.

\subsection{Three-dimensional space forms.}
\label{subsec:model}
\noindent

For $\kappa=-1,0,1$, we denote by $\m$ the complete, simply connected, Riemannian surface of constant sectional curvature $4\kappa$ described as follows. If $\kappa=1$ then $\m$ is the unit sphere $\mathbb{S}^2\sub\rr^3$ with its standard Riemannian metric scaled by $1/4$. If $\kappa=-1,0$ then $\m:=\{p\in\rr^2\,;\,|p|<1/|\kappa|\}$ endowed with the Riemannian metric $\rho^2\,(dx^2+dy^2)$, where $\rho(x,y):=(1+\kappa(x^2+y^2))^{-1}$. Note that $\mathbb{N}(-1)$ is the Poincar\'e model of the hyperbolic plane and $\mathbb{N}(0)$ is the Euclidean plane.

For $\kappa=-1,0$ we denote $\e:=\m\times\rr$. Let $(x,y,t)$ be the Euclidean coordinates in $\rr^3$. We define in $\e$ the planar distribution $\h:=\text{Ker}(\eta)$, where $\eta:=\rho\,(x\,dy-y\,dx)+dt$ and $\rho(x,y,t):=\rho(x,y)$. A basis $\{X,Y,T\}$ of vector fields on $\e$ such that $X$, $Y$ are sections of $\h$ and $T$ is the Reeb vector field associated to the contact $1$-form $\eta$ is given by
\begin{align*}
X&:=\frac{1}{\rho}\left(\cos(2\kappa t)\,\frac{\ptl}{\ptl x}-\sin(2\kappa t)\,\frac{\ptl}{\ptl y}\right)
+\left(y\,\cos(2\kappa t)+x\,\sin(2\kappa t)\right)\frac{\ptl}{\ptl t},
\\
Y&:=\frac{1}{\rho}\left(\sin(2\kappa t)\,\frac{\ptl}{\ptl x}+\cos(2\kappa t)\,\frac{\ptl}{\ptl y}\right)
+\left(y\,\sin(2\kappa t)-x\,\cos(2\kappa t)\right)\frac{\ptl}{\ptl t},
\\
T&:=\frac{\ptl}{\ptl t}.
\end{align*}
For $\kappa=1$ we consider $\mathbb{M}(\kappa):=\mathbb{S}^3$ with the planar distribution $\h:=\text{Ker}(\eta)$, where $\eta:=x_1\,dy_1-y_1\,dx_1+x_2\,dy_2-y_2\,dx_2$. Here $(x_1,y_1,x_2,y_2)$ are  the Euclidean coordinates in $\rr^4$.  A basis $\{X,Y,T\}$ of vector fields in the same conditions as above is defined by
\begin{align*}
X&:=-x_{2}\,\frac{\ptl}{\ptl x_{1}}+y_{2}\,\frac{\ptl}{\ptl
y_{1}}+x_{1}\,\frac{\ptl}{\ptl x_{2}}-y_{1}\,\frac{\ptl}{\ptl y_{2}},
\\
Y&:=-y_{2}\,\frac{\ptl}{\ptl x_{1}}-x_{2}\,\frac{\ptl}{\ptl
y_{1}}+y_{1}\,\frac{\ptl}{\ptl x_{2}}+x_{1}\,\frac{\ptl}{\ptl y_{2}},
\\
T&:=-y_{1}\,\frac{\ptl}{\ptl x_{1}}+x_{1}\,\frac{\ptl}{\ptl y_{1}}
-y_{2}\,\frac{\ptl}{\ptl x_{2}}+x_{2}\,\frac{\ptl}{\ptl y_{2}}.
\end{align*}
Some easy computations show the following bracket relations
\begin{equation}
\label{eq:lb}
[X,Y]=-2\,T,\quad [X,T]=(2\kappa)\,Y,\quad [Y,T]=-(2\kappa)\,X,\quad\kappa=-1,0,1.
\end{equation}
We consider the orientation in $\h$ (resp. $\e$) for which $\{X_p,Y_p\}$ (resp. $\{X_p,Y_p,T_p\}$) is a positive basis of $\h_p$ (resp. $T_p\e$) at any $p\in\e$. We take the Riemannian metric $g_h$ on $\h$ for which $\{X_p,Y_p\}$ is an orthonormal basis at any $p\in\e$. Hence, the associated orientation-preserving 90 degree rotation $J$ satisfies $J(X_p)=Y_p$ and $J(Y_p)=-X_p$.

In $\e$ there is a product $*$ such that $(\e,*)$ is the Heisenberg group when $\kappa=0$, the group of unit quaternions when $\kappa=1$, and the universal covering of the special linear group $\text{SL}(2,\rr)$ when $\kappa=-1$. The identity element for $*$ is the point $o:=(0,0,0)$ when $\kappa=-1,0$, or $o:=(1,0,0,0)$ when $\kappa=1$. The associated left translations (resp. right translations) are isometries of $\e$ when $\kappa=-1,0$ (resp. $\kappa=1$). Hence the isometry group of $\e$ acts transitively on $\e$. As a consequence $\e$ is homogeneous and, in particular, complete.

For any $s\in\rr$, we define the \emph{vertical translation} $\phi_s:\e\to\e$ as the map $\phi_s(p):=p*s\,T_p=p+s\,T_p$ if $\kappa=-1,0$, or $\phi_s(p):=e^{is}*p$ if $\kappa=1$. It is easy to see that $\{\phi_s\}_{s\in\rr}$ is the one-parameter group of diffeomorphisms associated to $T$. Since any $\phi_s$ is an isometry of $\e$ we conclude that $\e$ is a Sasakian sub-Riemannian $3$-manifold. The vertical lines in $\e$ parameterize straight lines if $\kappa=-1,0$ or great circles in $\stres$ if $\kappa=1$. The \emph{vertical axis} of $\e$ is the vertical line passing through $o$. By means of a left or right translation any vertical line is congruent to the vertical axis. We can check that a \emph{vertical rotation}, i.e., a Euclidean rotation about the vertical axis is an isometry of $\e$. It follows that any \emph{vertical screw motion}, defined as the composition of a vertical rotation and a vertical translation, is also an isometry of $\e$. We say that a set $S\subset\e$ is \emph{rotationally symmetric} if it is invariant under vertical rotations.

The space $\e$ has constant Webster scalar curvature $\kappa$, see \cite[Sect.~7.4]{blair} and \cite[Sect.~2.2]{rosales}. Indeed, a result of Tanno~\cite{tanno} establishes that $\e$ is, up to isometries, the unique complete, simply connected, Sasakian sub-Riemannian $3$-manifold of Webster scalar curvature $\kappa$. As in Riemannian geometry one can construct Sasakian sub-Riemannian $3$-manifolds of constant curvature and non-trivial topology, see \cite[Prop.~2.1, Ex.~2.2]{hr2}. In the sequel, by a \emph{$3$-dimensional space form} we mean a complete Sasakian sub-Riemannian $3$-manifold of constant Webster scalar curvature.

\newpage

\subsection{Carnot-Carath\'eodory geodesics and Jacobi fields}
\label{subsec:geodesics}
\noindent

Let $M$ be a Sasakian sub-Riemannian $3$-manifold. A \emph{horizontal curve} in $M$ is a $C^1$ curve $\ga$ whose velocity vector $\dot{\ga}$ is horizontal. The \emph{length} of $\ga$ over an interval $[a,b]$ is $\int_a^b|\dot{\ga}(s)|\,ds$. Following the approach in \cite[Sect.~3]{rr2} and \cite[Sect.~3]{rosales}, we say that a $C^2$ horizontal curve $\ga$ parameterized by arc-length is a \emph{Carnot-Carath\'eodory geodesic}, or simply a \emph{$CC$-geodesic}, if it is a critical point of length under $C^2$ variations by horizontal curves. As in \cite[Prop.~3.1]{rr2} this is equivalent to the existence of a constant $\la\in\rr$, called the \emph{curvature} of $\ga$, such that the second order ODE\begin{equation}
\label{eq:geoeq}
\dot{\ga}'+2\la\,J(\dot{\ga})=0
\end{equation}
is satisfied. Here the prime $'$ stands for the covariant derivative along $\ga$ in $(M,g)$. It follows that any CC-geodesic is a $C^\infty$ curve. If $p\in M$ and $v\in\h_p$ with $|v|=1$, then the unique maximal solution $\ga$ to \eqref{eq:geoeq} with $\ga(0)=p$ and $\dot{\ga}(0)=v$ is a CC-geodesic of curvature $\la$ since $\escpr{\dot{\ga},T}$ and $|\dot{\ga}|^2$ are constant functions along $\ga$. If $M$ is complete then any maximal CC-geodesic is defined on $\rr$, see \cite[Thm.~1.2]{falbel4}.

In \cite[Lem.~3.5]{rr2} and \cite[Lem.~3.3]{rosales} the \emph{CC-Jacobi fields} were introduced as infinitesimal vector fields associated to one-parameter families of CC-geodesics of the same curvature. In the next result, which follows from \cite[Lem.~3.3, Lem.~ 3.4]{rosales}, we gather some properties of these vector fields.

\begin{lemma}
\label{lem:ccjacobi}
Let $M$ be a complete Sasakian sub-Riemannian $3$-manifold. Consider a $C^1$ curve $\alpha:I\to M$ defined on some open interval $I\subseteq\rr$, and a $C^1$ unit horizontal vector field $U(\eps)$ along $\alpha$. For a fixed $\lambda\in\rr$, we define the map $F:I\times\rr\to M$ by $F(\eps,s):=\ga_{\eps}(s)$, where $\ga_{\eps}(s)$ is the CC-geodesic of curvature $\la$ with $\ga_{\eps}(0)=\alpha(\eps)$ and $\dot{\ga}_{\eps}(0)=U(\eps)$. Then, the vector field $V_{\eps}(s):=(\ptl F/\ptl\eps)(\eps,s)$ and the function $v_\eps(s):=\escpr{V_\eps(s),T}$ satisfy these properties:
\begin{itemize}
\item[(i)] $V_\eps$ is $C^\infty$ along $\ga_\eps$ with $[\dot{\ga}_\eps,V_\eps]=0$,
\item[(ii)] the expression of $V_\eps$ with respect to the orthonormal basis
$\{\dot{\ga}_\eps,J(\dot{\ga}_\eps),T\}$ is
\[
V_\eps=\left\{\la\left(\escpr{\dot{\alpha}(\eps),T}-v_\eps\right)+\escpr{\dot{\alpha}(\eps),U(\eps)}\right\}\dot{\ga}_\eps+(v'_\eps/2)\,J(\dot{\ga}_\eps)+v_\eps\,T,
\]
where the prime $'$ denotes the derivative with respect to $s$,
\item[(iii)] if we denote $\tau:=4\,(\la^2+K)$, then $v'''_\eps+\tau\,v'_\eps=0$ along $\ga_\eps$. Hence, if $M$ has constant Webster scalar curvature $K$, then we have:
\begin{itemize}
\item[(a)] for $\tau<0$,
\[
v_\eps(s)=\frac{1}{\sqrt{-\tau}}\left(a_\eps\,\sinh(\sqrt{-\tau}\,s)+b_\eps\,\cosh(\sqrt{-\tau}\,s)\right)+c_\eps,
\]
where $a_\eps=v_\eps'(0)$, $b_\eps=(1/\sqrt{-\tau})\,v_\eps''(0)$ and $c_\eps=v_\eps(0)+(1/\tau)\,v_\eps''(0)$,
\vspace{0,1cm}
\item[(b)] for $\tau=0$,
\[
v_\eps(s)=a_\eps\,s^2+b_\eps\,s+c_\eps,
\]
where $a_\eps=(1/2)\,v_\eps''(0)$, $b_\eps=v_\eps'(0)$ and $c_\eps=v_\eps(0)$,
\vspace{0,1cm}
\item[(c)] for $\tau>0$,
\[
v_\eps(s)=\frac{1}{\sqrt{\tau}}\left(a_\eps\,\sin(\sqrt{\tau}\,s)-b_\eps\,\cos(\sqrt{\tau}\,s)\right)+c_\eps,
\]
where $a_\eps=v_\eps'(0)$, $b_\eps=(1/\sqrt{\tau})\,v_\eps''(0)$ and $c_\eps=v_\eps(0)+(1/\tau)\,v_\eps''(0)$.
\end{itemize}
\end{itemize}
\end{lemma}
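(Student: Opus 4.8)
Throughout I would use three standing facts about each CC-geodesic $\ga_\eps$: it satisfies \eqref{eq:geoeq}, and along it $\escpr{\dot\ga_\eps,T}\equiv 0$ and $|\dot\ga_\eps|\equiv 1$, since $\dot\ga_\eps(0)=U(\eps)$ is a unit horizontal vector and both quantities are constant along a CC-geodesic. For (i), note that $\dot\ga_\eps$ and $V_\eps$ are the images under $F$ of the commuting coordinate fields $\ptl/\ptl s$ and $\ptl/\ptl\eps$ on $I\times\rr$, so $[\dot\ga_\eps,V_\eps]=F_*[\ptl/\ptl s,\ptl/\ptl\eps]=0$. Smoothness of $V_\eps$ in $s$ (for fixed $\eps$) follows from the smooth dependence of solutions of \eqref{eq:geoeq} on their initial data: $V_\eps$ solves along $\ga_\eps$ the second order linear equation obtained by differentiating \eqref{eq:geoeq} in $\eps$, whose coefficients are $C^\infty$, and its initial data at $s=0$ come from $\dot\alpha(\eps)$ and the $\eps$-derivative of $U(\eps)$.

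For (ii), since $\{\dot\ga_\eps,J(\dot\ga_\eps),T\}$ is orthonormal with $\dot\ga_\eps,J(\dot\ga_\eps)\in\h$, the $T$-component of $V_\eps$ equals $v_\eps=\escpr{V_\eps,T}$. Write $V_\eps=a\,\dot\ga_\eps+b\,J(\dot\ga_\eps)+v_\eps\,T$ with $a=\escpr{V_\eps,\dot\ga_\eps}$ and $b=\escpr{V_\eps,J(\dot\ga_\eps)}$, and differentiate in $s$. Using $[\dot\ga_\eps,V_\eps]=0$ (so $D_{\dot\ga_\eps}V_\eps=D_{V_\eps}\dot\ga_\eps$), the identity $D_{\dot\ga_\eps}T=J(\dot\ga_\eps)$, and the skew-symmetry of $J$, one finds $v'_\eps=2\,\escpr{V_\eps,J(\dot\ga_\eps)}$, i.e. $b=v'_\eps/2$. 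Differentiating $a$ and invoking \eqref{eq:geoeq} together with $|\dot\ga_\eps|\equiv1$ gives $a'=-\la\,v'_\eps$; integrating from $s=0$ with $V_\eps(0)=\dot\alpha(\eps)$, so that $a(0)=\escpr{\dot\alpha(\eps),U(\eps)}$ and $v_\eps(0)=\escpr{\dot\alpha(\eps),T}$, yields precisely the stated coefficient of $\dot\ga_\eps$.

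The core is (iii). Differentiating \eqref{eq:geoeq} in the $\eps$-direction, using $[\dot\ga_\eps,V_\eps]=0$ to convert the mixed second covariant derivative into a curvature term, and applying \eqref{eq:dujv}, I obtain the CC-Jacobi equation
\[
D_{\dot\ga_\eps}D_{\dot\ga_\eps}V_\eps+R(V_\eps,\dot\ga_\eps)\dot\ga_\eps+2\la\,J(D_{\dot\ga_\eps}V_\eps)-2\la\,\escpr{V_\eps,\dot\ga_\eps}\,T=0.
\]
Starting from $v'_\eps=2\,\escpr{V_\eps,J(\dot\ga_\eps)}$ and differentiating twice, I project this equation onto $J(\dot\ga_\eps)$ to control $\escpr{D_{\dot\ga_\eps}D_{\dot\ga_\eps}V_\eps,J(\dot\ga_\eps)}=-\escpr{R(V_\eps,\dot\ga_\eps)\dot\ga_\eps,J(\dot\ga_\eps)}$. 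The computation then reduces to two curvature quantities: the sectional curvature of $\h$, giving $\escpr{R(J(\dot\ga_\eps),\dot\ga_\eps)\dot\ga_\eps,J(\dot\ga_\eps)}=K_h=4K-3$, and a mixed term which I evaluate by deriving from \eqref{eq:dujv} the identity $R(X,Y)T=\escpr{Y,T}X-\escpr{X,T}Y$; the curvature symmetries then give $R(T,\dot\ga_\eps)\dot\ga_\eps=T$, hence $\escpr{R(T,\dot\ga_\eps)\dot\ga_\eps,J(\dot\ga_\eps)}=0$. After the $\dot\ga_\eps$- and $T$-contributions cancel, I am left with $v'''_\eps=-4(\la^2+K)\,v'_\eps=-\tau\,v'_\eps$.

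Finally, the formulas (a)--(c) just solve $v'''_\eps+\tau\,v'_\eps=0$ when $K$, hence $\tau$, is constant: the characteristic polynomial $r^3+\tau r=r(r^2+\tau)$ has roots $0$ and $\pm\sqrt{-\tau}$, producing the hyperbolic, polynomial, or trigonometric general solution according to the sign of $\tau$, and matching $v_\eps(0),v'_\eps(0),v''_\eps(0)$ fixes $a_\eps,b_\eps,c_\eps$. I expect the main obstacle to be (iii): deriving the CC-Jacobi equation correctly and, above all, computing the two curvature terms, for which the decisive simplification is the identity $R(X,Y)T=\escpr{Y,T}X-\escpr{X,T}Y$ read off from \eqref{eq:dujv}.
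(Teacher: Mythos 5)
Your argument is correct: the CC-Jacobi equation obtained by differentiating \eqref{eq:geoeq} in the $\eps$-direction, the identity $R(X,Y)T=\escpr{Y,T}\,X-\escpr{X,T}\,Y$ read off from \eqref{eq:dujv} (which kills the mixed curvature term since $\escpr{R(T,\dot\ga_\eps)\dot\ga_\eps,J(\dot\ga_\eps)}=\escpr{T,J(\dot\ga_\eps)}=0$), and the normalization $K_h=4K-3$ combine exactly as you indicate to give $v_\eps'''+\tau\,v_\eps'=0$, and your computations for (i), (ii) and the explicit solutions (a)--(c) all check out. The paper itself gives no proof of this lemma --- it simply cites Lemmas~3.3 and 3.4 of \cite{rosales} --- and your derivation is essentially the standard one carried out there, so there is nothing to flag beyond the routine care needed in (i) to justify $D_{\dot\ga_\eps}V_\eps=D_{V_\eps}\dot\ga_\eps$ when $\alpha$ and $U$ are only $C^1$ (which your appeal to the linear variational equation handles).
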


\subsection{Horizontal geometry of surfaces}
\label{subsec:horizontal}
\noindent

Let $\Sg$ be a $C^1$ surface immersed in a Sasakian sub-Riemannian $3$-manifold $M$. Unless explicitly stated we will assume that $\ptl\Sg=\emptyset$. The \emph{singular set} $\Sg_0$ of $\Sg$ consists of those points $p\in\Sg$ for which the tangent plane $T_p\Sg$ equals the horizontal plane $\h_{p}$. Since $\h$ is a completely nonintegrable distribution, it follows by Frobenius theorem that $\Sg_0$ is closed and has empty interior in $\Sg$. Hence the \emph{regular set} $\Sg-\Sg_0$ of $\Sg$ is open and dense in $\Sg$. By using the arguments in \cite[Lem.~1]{d2}, see also \cite[Thm.~1.2]{balogh} and \cite[App.~A]{hp2}, we deduce that, for a $C^2$ surface $\Sg$, the Hausdorff dimension of $\Sg_{0}$ in $(M,g)$ is less than or equal to $1$.  In particular, the Riemannian area of $\Sg_{0}$ vanishes.

Suppose that $\Sg$ is oriented, and denote by $N$ the unit normal along $\Sg$ in $(M,g)$ which is compatible with the orientations of $\Sg$ and $M$. We define the (sub-Riemannian) \emph{area} of $\Sg$ by
\begin{equation}
\label{eq:area}
A(\Sg):=\int_{\Sg}|N_{h}|\,da,
\end{equation}
where $N_h=N-\escpr{N,T}\,T$ and $da$ is the area element in $(M,g)$. In case $\Sg$ bounds a set $\Om\subset M$, then $A(\Sg)$ coincides with the \emph{sub-Riemannian perimeter \'a la De Giorgi} of $\Om$, which can be introduced as in \cite{cdg1}. Note that $\Sg_{0}=\{p\in\Sg\,;N_h(p)=0\}$. In the regular set $\Sg-\Sg_0$, we can define the \emph{horizontal Gauss map} $\nu_h$ and the \emph{characteristic vector field} $Z$, by
\begin{equation}
\label{eq:nuh}
\nu_h:=\frac{N_h}{|N_h|}, \qquad Z:=J(\nuh).
\end{equation}
As $Z$ is horizontal and orthogonal to $\nu_h$ then $Z$ is tangent to $\Sg$. Hence $Z_{p}$ generates $T_{p}\Sg\cap\h_{p}$ for any $p\in\Sg-\Sg_0$. The integral curves of $Z$ in $\Sg-\Sg_0$ will be called $(\emph{oriented}\,)$ \emph{characteristic curves} of $\Sg$.  They are
both tangent to $\Sg$ and to $\h$.  If we define
\begin{equation}
\label{eq:ese}
S:=\escpr{N,T}\,\nu_h-|N_h|\,T,
\end{equation}
then $\{Z_{p},S_{p}\}$ is an orthonormal basis of $T_p\Sg$ whenever
$p\in\Sg-\Sg_0$.  Moreover, for any $p\in\Sg-\Sg_{0}$ we have the
orthonormal basis of $T_{p}M$ given by $\{Z_{p},(\nuh)_{p},T_{p}
\}$.  From here we deduce that
\begin{equation}
\label{eq:relations}
|N_{h}|^2+\escpr{N,T}^2=1, \qquad (\nu_{h})^\top=\escpr{N,T}\,S,
\qquad T^\top=-|N_{h}|\,S,
\end{equation}
on $\Sg-\Sg_0$, where $U^\top$ stands for the projection of a vector field $U$ onto the tangent plane to $\Sg$.

A \emph{vertical point} is a point $p\in\Sg$ such that $T_p\in T_p\Sg$. This is equivalent to that $\escpr{N_p,T_p}=0$. We say that $\Sg$ is a \emph{vertical surface} if any $p\in\Sg$ is a vertical point, i.e., $\escpr{N,T}=0$ along $\Sg$.

If $\Sg$ is an oriented $C^2$ surface immersed in $M$ then, for any
$p\in\Sg-\Sg_0$ and $v\in T_pM$, we have these equalities, see \cite[Lem.~3.5]{hrr} and \cite[Lem.~4.2]{rosales}
\begin{align}
\label{eq:vmnh}
v\,(|N_h|)&=\escpr{D_{v}N, \nu_{h}}+\escpr{N,T}\,
\escpr{v,Z},
\\
\label{eq:vnt}
v(\escpr{N,T})&=\escpr{D_{v}N,T}+\escpr{N,J(v)},
\\
\label{eq:dvnuh}
D_{v}\nu_h&=|N_h|^{-1}\, \big(\escpr{D_vN,Z}-\escpr{N,T}\,
\escpr{v,\nuh}\big)\,Z+\escpr{v,Z}\,T.
\end{align}

We denote by $B$ the \emph{shape operator} of $\Sg$ in $(M,g)$.  It is given by $B(U):=-D_{U}N$,  for any vector $U$ tangent to $\Sg$. Finally, we will say that $\Sg$ is \emph{complete} if it is complete with respect to the Riemannian metric induced by $g$.

\section{Strongly stable surfaces}
\label{sec:statstable}

In this section we establish sufficient conditions ensuring that a surface inside a Sasakian sub-Riemannian $3$-manifold is strongly stable (the precise definition is given in Section~\ref{subsec:stable} below). We begin by recalling some known facts about critical points and second order minima of the area with or without a volume constraint.

\subsection{Area-stationary surfaces}
\label{subsec:stationary}
\noindent

Let $M$ be a Sasakian sub-Riemannian $3$-manifold and $\varphi_0:\Sg\to M$ an oriented $C^1$ surface immersed in $M$. By a \emph{variation} of $\Sg$ we mean a $C^1$ map $\varphi:I\times\Sg\to M$, where $I\subeq\rr$ is an open interval containing $0$, and $\varphi$ satisfies:
\begin{itemize}
\item[(i)] $\var(0,p)=\varphi_0(p)$ for any $p\in\Sg$,
\item[(ii)] the map $\varphi_s:\Sg\to M$ given by $\varphi_s(p):=\varphi(s,p)$ is an immersion for any $s\in I$,
\item[(iii)] there is a compact set $C\subseteq\Sg$ such that $\varphi_{s}(p)=\varphi_0(p)$ for any $s\in I$ and any $p\in\Sg-C$.
\end{itemize}
Observe that we do not assume neither a specific form for $\varphi$, nor that the \emph{velocity vector field} $U_p:=(\ptl\varphi/\ptl s)(0,p)$ is always normal to $\Sg$ or proportional to the horizontal Gauss map $\nuh$ in \eqref{eq:nuh}.

The \emph{area functional} associated to $\varphi$ is given by $A(s):=A(\Sg_{s})$, see \eqref{eq:area}.  We consider the \emph{volume functional} $V(s)$ defined in \cite[Sect.~2]{bdce} as the signed volume in $(M,g)$ between $\Sg$ and $\Sg_s$. More precisely
\begin{equation}
\label{eq:volume}
V(s):=\int_{[0,s]\times C}\varphi^*(dv),
\end{equation}
where $dv$ denotes the volume element in $(M,g)$. A variation $\varphi$ is \emph{volume preserving} if $V(s)$ is constant for any $s$ small enough. We say that $\Sg$ is \emph{area-stationary} if $A'(0)=0$ for any variation of $\Sg$.
We say that $\Sg$ is \emph{volume-preserving area-stationary} or \emph{area-stationary under a volume constraint} if $A'(0)=0$ for any volume-preserving variation of $\Sg$.

We denote by $N$ the unit normal vector along $\Sg$ in $(M,g)$ which is compatible with the orientations of $\Sg$ and $M$. If $\Sg$ is $C^2$ on $\Sg-\Sg_0$, then the (sub-Riemannian) \emph{mean curvature} of $\Sg$ is the function $H$ on $\Sg-\Sg_0$ defined as in \cite{rr1} and \cite{rosales} by
\[
\label{eq:mc}
-2H:=\divv_\Sg\nuh=\escpr{D_{Z}\nuh,Z}+\escpr{D_{S}\nuh, S},
\]
where $\{Z,S\}$ is the orthonormal basis of the tangent plane to $\Sg-\Sg_0$ defined in \eqref{eq:nuh} and \eqref{eq:ese}. From \eqref{eq:dvnuh} it follows that $D_S\nuh$ is proportional to $Z$, and so $\escpr{D_{S}\nuh,S}=0$. Since $\escpr{Z,\nuh}=0$ we deduce the identity
\begin{equation}
\label{eq:mc2}
2H=\escpr{D_{Z}Z,\nuh} \ \text{on} \ \Sg-\Sg_0.
\end{equation}
Suppose now that $\Sg$ is $C^1$ and the vector field $Z$ is $C^1$ along the characteristic curves. In such a case, we define the mean curvature of $\Sg$ by means of equality \eqref{eq:mc2}. This coincides, up to a factor, with the definition in \cite[Eq.~(5.1)]{galli-ritore2}. We say that $\Sg$ has \emph{constant mean curvature} (CMC) if $H$ is constant on $\Sg-\Sg_0$. When $H=0$ on $\Sg-\Sg_0$ we say that $\Sg$ is a \emph{minimal surface}. The next result characterizes $C^1$ volume-preserving area-stationary surfaces with empty singular set.

\begin{proposition}
\label{prop:vpstationary}
Let $\Sg$ be an oriented $C^1$ surface immersed in a Sasakian sub-Riemannian $3$-manifold $M$. Suppose that $\Sg_0=\emptyset$ and the vector field $Z$ is $C^1$ along the characteristic curves. Then, $\Sg$ is volume-preserving area-stationary $($resp. area-stationary$)$ if and only if $H$ is constant $($resp. $H=0$$)$ on $\Sg$. In such a case, given any point $p\in\Sg$, there is a unique characteristic curve $\ga$ through $p$, and $\ga$ is a CC-geodesic in $M$ of curvature $H$.
\end{proposition}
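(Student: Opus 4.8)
The plan is to reduce the statement to the first variation formulas for $A$ and $V$, then run a Lagrange-multiplier argument, and finally close with a short covariant computation for the geodesic claim. First I would record the first variation formulas. For a variation of $\Sg$ with velocity field $U$ and normal component $u:=\escpr{U,N}$, the formulas of \cite[Sect.~4.1]{hr2} read
\[
V'(0)=\int_\Sg u\,da,\qquad A'(0)=-\int_\Sg 2H\,u\,da,
\]
the tangential divergence and boundary contributions having been discarded because $\ptl\Sg=\Sg_0=\emptyset$ and the variation has compact support. Here it is essential that $\Sg_0=\emptyset$ and that $Z$ is $C^1$ along the characteristic curves, so that $H$ is defined pointwise through \eqref{eq:mc2} and these formulas hold in the present $C^1$ setting.

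Next I would dispatch the two ``if'' implications. If $H$ is constant, then any volume-preserving variation has $0=V'(0)=\int_\Sg u\,da$, whence $A'(0)=-2H\int_\Sg u\,da=0$; thus $\Sg$ is volume-preserving area-stationary. If moreover $H=0$, then $A'(0)=0$ for \emph{every} variation, so $\Sg$ is area-stationary.

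The converse is the crux. Suppose $\Sg$ is volume-preserving area-stationary. I would fix $u_0\in C^1_0(\Sg)$ with $\int_\Sg u_0\,da=1$ and, for an arbitrary $u\in C^1_0(\Sg)$, set $\tilde u:=u-\big(\int_\Sg u\,da\big)\,u_0$, so that $\int_\Sg\tilde u\,da=0$. The hard part will be to realize $\tilde u$ as the normal component of a genuinely volume-preserving variation: this is the sub-Riemannian counterpart of the Barbosa--do Carmo construction \cite{bdce}, and it is exactly where $\Sg_0=\emptyset$ and the $C^1$ regularity of $Z$ along the characteristic curves are needed to keep the construction admissible. Granting it, the hypothesis gives $0=A'(0)=-\int_\Sg 2H\,\tilde u\,da$, that is $\int_\Sg 2H\,u\,da=c\int_\Sg u\,da$ with $c:=\int_\Sg 2H\,u_0\,da$. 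Hence $\int_\Sg(2H-c)\,u\,da=0$ for all $u\in C^1_0(\Sg)$, and the fundamental lemma of the calculus of variations forces $H\equiv c/2$, a constant. In the unconstrained case one argues directly: taking the normal variation with velocity $u\,N$ for each $u\in C^1_0(\Sg)$, area-stationarity yields $\int_\Sg 2H\,u\,da=0$ for all $u$, hence $H\equiv 0$.

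Finally I would prove the geodesic statement, now knowing $H$ is constant. Since $\Sg_0=\emptyset$, the field $Z=J(\nuh)$ is a continuous unit field tangent to $\Sg$, so through each $p\in\Sg$ there is a characteristic curve $\ga$, which I parameterize by arc length so that $\dot\ga=Z$; by hypothesis $Z$ is $C^1$ along $\ga$, so $\dot\ga'=D_ZZ$ exists. In the orthonormal frame $\{Z,\nuh,T\}$ I would compute its three components: $\escpr{D_ZZ,Z}=(1/2)\,Z(|Z|^2)=0$; $\escpr{D_ZZ,\nuh}=2H$ by \eqref{eq:mc2}; and, using $D_ZT=J(Z)$ from \eqref{eq:dujv} together with $\escpr{Z,T}=0$, $\escpr{D_ZZ,T}=-\escpr{Z,J(Z)}=0$. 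Therefore $D_ZZ=2H\,\nuh$, and since $J(\dot\ga)=J(J(\nuh))=-\nuh$ this reads $\dot\ga'+2H\,J(\dot\ga)=0$, which is precisely \eqref{eq:geoeq}. Thus $\ga$ is a CC-geodesic of curvature $H$; being the unique maximal solution of \eqref{eq:geoeq} with $\ga(0)=p$ and $\dot\ga(0)=Z_p$, it is the unique characteristic curve through $p$.
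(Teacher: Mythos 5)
The paper does not give a self-contained proof of this proposition: it assembles it from the literature, quoting \cite[Sect.~4.1]{hr2} for the (well-known) $C^2$ case and outsourcing every genuinely $C^1$ issue to Galli--Ritor\'e \cite{galli-ritore2} (their Cor.~5.2 for the equivalence with constancy of $H$, the proof of their Thm.~4.1 for uniqueness of characteristic curves, and their Prop.~5.4 for the smoothness of those curves), before finishing ``as in the $C^2$ case''. Your direct variational route is the standard one, and your closing computation is correct and is essentially the paper's own final step: $D_ZZ=2H\,\nu_h$ together with $J(Z)=-\nu_h$ gives \eqref{eq:geoeq} with $\la=H$, and once $H$ is constant the smooth ODE \eqref{eq:geoeq} yields uniqueness of the characteristic curve through $p$ --- a cleaner way to get uniqueness than the paper's citation. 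The deferred Barbosa--do Carmo construction of a volume-preserving variation with prescribed mean-zero normal component is also harmless: $V$ is the Riemannian volume, so nothing sub-Riemannian intervenes there.

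The genuine gap is the step you assert rather than prove: that the first variation formulas $V'(0)=\int_\Sg u\,da$ and $A'(0)=-\int_\Sg 2H\,u\,da$ ``hold in the present $C^1$ setting''. The reference you invoke, \cite[Sect.~4.1]{hr2}, derives $A'(0)=-\int_\Sg 2Hu\,da$ for $C^2$ surfaces; the derivation differentiates $\mnh\,da$ under the integral sign and then integrates by parts, which throws derivatives onto $\nu_h$ and $N$ --- objects that for a $C^1$ surface are merely continuous. Knowing that $H$ is defined pointwise via \eqref{eq:mc2} (because $Z$ is $C^1$ along characteristic curves) does not by itself justify the integration by parts or the identification of the distributional Euler--Lagrange equation with the pointwise one; establishing this, and the accompanying regularity bootstrap for characteristic curves, is precisely the content of \cite{galli-ritore2} and is the reason the proposition is stated in $C^1$ regularity at all. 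As written, your argument proves the $C^2$ case and silently assumes the hard part of the $C^1$ case.
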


This proposition is well known for $C^2$ surfaces, see \cite[Sect.~4.1]{hr2} and the references therein. In the $C^1$ case the first part of the statement is deduced from \cite[Sect.~3, Cor.~5.2]{galli-ritore2}, see also \cite[Prop.~6.3, Re.~6.4]{galli}. The second part was obtained in \cite{chy2} for the Heisenberg group $\mm(0)$. The uniqueness of the characteristic curves is found in the proof of \cite[Thm.~4.1]{galli-ritore2}. Finally, if $\ga$ is a characteristic curve, then the regularity result in \cite[Prop.~5.4]{galli-ritore2} implies that $\ga$ is a $C^\infty$ curve. Now, we can proceed as in the case where $\Sg$ is $C^2$ to conclude that $\ga$ is a CC-geodesic of curvature $H$.

The interested reader is referred to a recent work of Galli~\cite{galli3} for a generalization in contact sub-Riemannian $3$-manifolds for domains with prescribed mean curvature and Lipschitz boundary which is locally a regular intrinsic graph.

\subsection{Stability and second variation formula}
\label{subsec:stable}
\noindent

In this section we will assume more analytical regularity than in the previous one. For our purposes in the paper, it will suffice to consider $C^2$ surfaces $\Sg$ such that $\Sg-\Sg_0$ is $C^3$, and $C^2$ variations $\varphi$ of class $C^3$ off of $\Sg-\Sg_0$.

Let $\Sg$ be an oriented surface immersed in a Sasakian sub-Riemannian $3$-manifold $M$. Suppose that $\Sg$ is area-stationary with or without a volume constraint, and let $H$ be the constant mean curvature along $\Sg-\Sg_0$. From the first variational formulas for area and volume, see for instance \cite[Sect.~4.1]{hr2}, it follows that $(A+2HV)'(0)=0$ for any variation of $\Sg$. We say that $\Sg$ is
\begin{itemize}
\item[(i)]  \emph{stable} if $(A+2HV)''(0)\geq 0$ for any volume-preserving variation of $\Sg$.
\item[(ii)] \emph{strongly stable} if $(A+2HV)''(0)\geq 0$ for any variation of $\Sg$.
\item[(iii)] \emph{strictly stable} if $(A+2HV)''(0)\geq 0$ for any variation of $\Sg$, and equality holds if and only if the associated velocity vector field $U$ is tangent to $\Sg$.
\end{itemize}

For an area-stationary surface to be strongly stable agrees with the usual notion of stability for minimal surfaces in Riemannian geometry. Note that a strictly stable surface is in particular a strict minimum of the area under volume-preserving variations with non-tangent velocity. Obviously strict stability implies strong stability and strong stability implies stability.

In order to analyze the stability conditions we need to compute $(A+2HV)''(0)$. This involves a technical issue since differentiating two times under the integral sign in \eqref{eq:area} may be not possible. This problem can be solved by applying Leibniz's rule when the variation $\varphi$ is \emph{admissible} in the sense of \cite[Def.~5.1]{hr2}. In such a case we deduce from \cite[Thm.~7.1]{hr2} that
\begin{equation}
\label{eq:2ndvar}
(A+2HV)''(0)=\mathcal{Q}(w,w)+\int_\Sg\divv_\Sg G\,da,
\end{equation}
provided all the terms are locally integrable with respect to $da$. In the previous formula, $w$ is the normal component of the velocity vector field $U$ associated to the variation, $\divv_\Sg G$ is the divergence relative to $\Sg$ of a certain tangent $C^1$ vector field $G$ along $\Sg-\Sg_0$ (an explicit expression is found in \cite[Eq.~(7.1)]{hr2}), and $\mathcal{Q}$ is the (sub-Riemannian) \emph{index form} of $\Sg$ given by
\begin{equation}
\label{eq:indexform}
\mathcal{Q}(u,v):=\int_{\Sg}|N_{h}|^{-1}\left\{Z(u)\,Z(v)-
\big(|B(Z)+S|^2+4\,(K-1)\,|N_{h}|^2\big)\,u\,v\right\}da,
\end{equation}
where $K$ is the Webster scalar curvature of $M$ and $B$ is the shape operator of $\Sg$ in $(M,g)$. Note that $\mathcal{Q}(u,v)$ is well defined if, for instance, $u\in C^1_0(\Sg-\Sg_0)$ and $v\in C^1(\Sg-\Sg_0)$.

As was pointed out in \cite[Ex.~8.2]{hr2}, if the variation $\varphi:I\times\Sg\to M$ is compactly supported on $\Sg-\Sg_0$, then there is an interval $I'\sub\sub I$ such that the restriction of $\varphi$ to $I'\times\Sg$ is admissible. Moreover, we can apply the Riemannian divergence theorem to get $\int_{\Sg}\divv_\Sg G\,da=0$. From \eqref{eq:2ndvar} we deduce that
\begin{equation}
\label{eq:empty}
(A+2HV)''(0)=\mathcal{Q}(w,w), \quad\text{provided $\varphi$ is supported on $\Sg-\Sg_0$.}
\end{equation}

\begin{remark}
\label{re:trivial}
As an immediate consequence of \eqref{eq:empty} and \eqref{eq:indexform}, if the function $q:=|B(Z)+S|^2+4\,(K-1)\,|N_{h}|^2$ satisfies $q\leq 0$ (resp. $q<0$), then $\Sg-\Sg_0$ is strongly stable (resp. strictly stable).
\end{remark}

\begin{example}[Vertical surfaces]
\label{ex:vertical}
Let $\Sg$ be a complete orientable CMC vertical surface immersed in $M$ (see \cite[Sect.~4.2]{rosales} for a characterization result). Then, it is easy to check that $|B(Z)+S|^2+4\,(K-1)\,\mnh^2=4\,(H^2+K)$, so that $\Sg$ is strongly stable (resp. strictly stable) provided $H^2+K\leq 0$ (resp. $H^2+K<0$). On the other hand, if $M=\e$ and $H^2+\kappa>0$, then $\Sg$ is unstable by \cite[Thm.~6.7]{rosales}.
\end{example}

If $u\in C^1_0(\Sg-\Sg_0)$ and $v\in C^2(\Sg-\Sg_0)$ then we can use integration by parts formulas, so that the index form can be expressed in the following way, see \cite[Prop.~3.14]{hrr} and \cite[Prop.~5.8]{rosales}
\begin{equation}
\label{eq:ibp}
\mathcal{Q}(u,v)=-\int_{\Sg} u\,\ele(v)\,da,
\end{equation}
where $\ele$ is the (sub-Riemannian) \emph{Jacobi operator} of $\Sg$, defined by
\begin{align}
\label{eq:lu}
\mathcal{L}(\psi):=|N_{h}|^{-1}\big\{Z(Z(\psi))&+2\,|N_{h}|^{-1}\,\escpr{N,T}\,\escpr{B(Z),S}\,Z(\psi)
\\
\nonumber
&+(|B(Z)+S|^2+4\,(K-1)\,|N_{h}|^2)\,\psi\big\}.
\end{align}
By a (sub-Riemannian) \emph{Jacobi function} on $\Sg$ we mean a function $\psi\in C^2(\Sg-\Sg_0)$ with $\ele(\psi)=0$.

In Riemannian geometry it is well known that the normal component along a CMC surface of an ambient Killing field is a Riemannian Jacobi function, see \cite[Prop.~(2.12)]{bdce}. Here we prove in our sub-Riemannian context a similar result for the Reeb vector field.

\begin{lemma}
\label{lem:jacobi}
Let $\Sg$ be an oriented $C^2$ surface immersed in a Sasakian sub-Riemannian $3$-manifold $M$. Suppose that $\Sg-\Sg_0$ is a $C^3$ surface of constant mean curvature $H$. Then, the function $\psi:=\escpr{N,T}$ is a Jacobi function on $\Sg$.
\end{lemma}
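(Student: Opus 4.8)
The plan is to verify directly that $\ele(\psi)=0$ on the regular set $\Sg-\Sg_0$ (which is where a Jacobi function is required to satisfy the equation), exploiting that the operator $\ele$ in \eqref{eq:lu} only differentiates in the characteristic direction $Z$. So I fix a characteristic curve $\ga$, i.e. an integral curve of $Z$, which by Proposition~\ref{prop:vpstationary} is a CC-geodesic of curvature $H$, and reduce the whole verification to an ODE identity along $\ga$, where $Z(f)=f'$. The working data are the adapted orthonormal frame $\{Z,\nuh,T\}$ together with the decompositions $N=\mnh\,\nuh+\psi\,T$ and $S=\psi\,\nuh-\mnh\,T$ coming from \eqref{eq:ese}--\eqref{eq:relations}, and the constraint $\mnh^2+\psi^2=1$. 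The $C^3$ regularity of $\Sg-\Sg_0$ assumed in the statement is exactly what makes the second-order quantities below well defined.

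First I would record the covariant derivatives of the frame along $Z$: from $D_UT=J(U)$ in \eqref{eq:dujv} and $J(Z)=-\nuh$ one gets $D_ZT=-\nuh$; the CC-geodesic equation \eqref{eq:geoeq} gives $D_ZZ=2H\,\nuh$; and \eqref{eq:dvnuh} together with $\escpr{B(Z),Z}=2H\mnh$ yields $D_Z\nuh=-2H\,Z+T$. Differentiating $N$ and $S$ with these formulas produces $B(Z)=2H\mnh\,Z+\sg\,S$, where $\sg:=\escpr{B(Z),S}$, whence $|B(Z)+S|^2=4H^2\mnh^2+(\sg+1)^2$. Using \eqref{eq:vnt} (or differentiating $N$ directly) I obtain the first derivatives $Z(\psi)=\mnh\,(\sg-1)$ and, from $\mnh^2+\psi^2=1$, $Z(\mnh)=-\psi\,(\sg-1)$.

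The heart of the matter is the second derivative $Z(Z(\psi))=Z(\mnh)\,(\sg-1)+\mnh\,Z(\sg)$, which forces me to evaluate $Z(\sg)=Z(\escpr{B(Z),S})$; this is the step where the ambient curvature enters and I expect it to be the main obstacle. I would apply the Codazzi equation $(\nabla^{\Sg}_ZB)(S)-(\nabla^{\Sg}_SB)(Z)=-(R(Z,S)N)^{\top}$ and pair it with $Z$. The intrinsic part needs the induced connection coefficients $\nabla^{\Sg}_ZZ=2H\psi\,S$, $\nabla^{\Sg}_ZS=-2H\psi\,Z$ and the auxiliary quantity $c:=\escpr{\nabla^{\Sg}_SZ,S}$, which I would compute from \eqref{eq:dujv} and \eqref{eq:dvnuh} to be $c=\psi\,\mnh^{-1}\,(\sg+1)$; pleasantly, the terms involving $\escpr{B(S),S}$ cancel and never need to be evaluated. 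For the curvature part I would use the universal Sasakian identity $R(X,Y)T=\escpr{Y,T}\,X-\escpr{X,T}\,Y$ (a direct consequence of \eqref{eq:dujv}, giving $R(Z,T)Z=-T$) together with the relation $K_h=4K-3$ between the horizontal sectional curvature and the Webster curvature $K$; these yield $\escpr{R(Z,S)N,Z}=4\,(K-1)\,\mnh\,\psi$, and hence $Z(\sg)=-4\,(K-1)\,\psi\,\mnh-4H^2\,\psi\,\mnh-2c\,\sg$.

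Finally I would substitute these expressions into the bracket defining $\ele(\psi)$ in \eqref{eq:lu}. Three separate cancellations then occur: the $4H^2\mnh^2$ contribution from $|B(Z)+S|^2$ is killed by the $H^2$ term in $\mnh\,Z(\sg)$; the curvature contribution $-4(K-1)\psi\,\mnh^2$ in $\mnh\,Z(\sg)$ cancels exactly the explicit Webster term $4(K-1)\mnh^2\,\psi$; and the remaining purely algebraic terms cancel because $2c\,\sg\,\mnh=2\psi(\sg-1)^2+6\psi(\sg-1)+4\psi$. Thus the bracket vanishes, so $\ele(\psi)=0$ on $\Sg-\Sg_0$ and $\psi=\escpr{N,T}$ is a Jacobi function. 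The two delicate points are fixing the correct sign in the Codazzi identity — it is precisely this sign that makes the Webster terms cancel rather than reinforce — and the clean evaluation of the coefficient $c$; everything else is routine bookkeeping with the structure equations.
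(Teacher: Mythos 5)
Your proposal is correct and follows essentially the same route as the paper: the same identities $Z(\psi)=\mnh(\escpr{B(Z),S}-1)$, $Z(\mnh)=-\psi(\escpr{B(Z),S}-1)$, the decomposition $B(Z)=2H\mnh\,Z+\escpr{B(Z),S}\,S$, and the same final cancellation in \eqref{eq:lu}; your Codazzi-derived formula for $Z(\escpr{B(Z),S})$ agrees exactly with the one the paper imports from \cite[Lem.~5.5]{rosales}. The only difference is that you re-derive that key identity in-line rather than citing it, which makes the argument self-contained but otherwise changes nothing.
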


\begin{proof}
We compute $\ele(\escpr{N,T})$ from \eqref{eq:lu}. From \eqref{eq:vnt} and the third equality in \eqref{eq:relations}, we obtain
\begin{equation}
\label{eq:znt}
Z(\escpr{N,T})=\mnh\,\big(\escpr{B(Z),S}-1\big).
\end{equation}
By differentiating into the previous formula, we get
\[
Z(Z(\escpr{N,T}))=Z(\mnh)\,\big(\escpr{B(Z),S}-1\big)+\mnh\,Z(\escpr{B(Z),S}).
\]
Equation \eqref{eq:vmnh} together with the second equality in \eqref{eq:relations} implies
\begin{equation}
\label{eq:zmnh}
Z(\mnh)=\escpr{N,T}\,\big(1-\escpr{B(Z),S}\big).
\end{equation}
The derivative $Z(\escpr{B(Z),S})$ was computed in \cite[Lem.~5.5]{rosales}. We have
\[
Z(\escpr{B(Z),S})=4\,\mnh\,\escpr{N,T}\,(1-K-H^2)-2\,\mnh^{-1}\,\escpr{N,T}\,\escpr{B(Z),S}\,\big(1+\escpr{B(Z),S}\big).
\]

On the other hand, note that $D_ZZ=(2H)\,\nuh$. This is a consequence of \eqref{eq:mc2} together with identities $\escpr{D_ZZ,Z}=0$ and $\escpr{D_ZZ,T}=-\escpr{Z,J(Z)}=0$. Therefore, we deduce that
\begin{align}
\nonumber
B(Z)&=\escpr{B(Z),Z}\,Z+\escpr{B(Z),S}\,S=\escpr{N,D_ZZ}\,Z+\escpr{B(Z),S}\,S
\\
\label{eq:guachi}
&=2H\,\mnh\,Z+\escpr{B(Z),S}\,S.
\end{align}
From here, it is straightforward to check that
\[
|B(Z)+S|^2+4\,(K-1)\,\mnh^2=4\,(H^2+K-1)\,\mnh^2+\big(1+\escpr{B(Z),S}\big)^2.
\]

The proof finishes by substituting the previous equalities into \eqref{eq:lu} and simplifying.
\end{proof}

\subsection{Criteria for strong stability}
\label{subsec:criteria}
\noindent

Now, we have all the ingredients necessary to prove the following result ensuring strong stability of the regular set of a CMC surface.

\begin{theorem}
\label{th:criterion}
Let $M$ be a Sasakian sub-Riemannian $3$-manifold. Consider an oriented $C^2$ surface $\Sg$ immersed in $M$ such that $\Sg-\Sg_0$ is $C^3$ and has constant mean curvature $H$. Suppose there is a nowhere vanishing function $\psi\in C^2(\Sg-\Sg_0)$ such that $\psi\,\ele(\psi)\leq 0$ $($resp. $\psi\,\ele(\psi)< 0$$)$. Then, we have $\mathcal{Q}(w,w)\geq 0$ $($resp. $\mathcal{Q}(w,w)>0$$)$ for any $w\in C^1_0(\Sg-\Sg_0)$ with $w\neq 0$. In particular, $\Sg-\Sg_0$ is strongly stable $($resp. strictly stable$)$.
\end{theorem}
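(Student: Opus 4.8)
The plan is to reduce the assertion to the integration-by-parts identity \eqref{eq:ibp} by means of a Picone-type substitution, which is the sub-Riemannian analogue of the logarithmic change of variables employed by Fischer-Colbrie and Schoen. Since $\psi$ is nowhere vanishing and of class $C^2$ on $\Sg-\Sg_0$, for any $w\in C^1_0(\Sg-\Sg_0)$ the quotient $\phi:=w/\psi$ lies again in $C^1_0(\Sg-\Sg_0)$. Thus it suffices to bound $\mathcal{Q}(\psi\phi,\psi\phi)$ from below in terms of $\phi$.

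First I would expand the quadratic form. Writing $w=\psi\phi$ and using $Z(\psi\phi)=\psi\,Z(\phi)+\phi\,Z(\psi)$ in the definition \eqref{eq:indexform}, a direct computation yields the identity
\[
\mathcal{Q}(\psi\phi,\psi\phi)=\int_\Sg\mnh^{-1}\,\psi^2\,Z(\phi)^2\,da+\mathcal{Q}(\phi^2\psi,\,\psi),
\]
where the first term on the right is manifestly nonnegative since $\mnh^{-1}>0$ and $\psi^2\geq 0$. The point of the computation is that, after setting $q:=|B(Z)+S|^2+4\,(K-1)\,\mnh^2$, the mixed term $2\psi\phi\,Z(\phi)\,Z(\psi)$, the term $\phi^2\,Z(\psi)^2$ and the potential contribution $q\,\psi^2\phi^2$ reassemble exactly into the index form of the pair $(\phi^2\psi,\psi)$, once one observes that $2\phi\,Z(\phi)=Z(\phi^2)$.

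Next I would apply \eqref{eq:ibp} with $u=\phi^2\psi\in C^1_0(\Sg-\Sg_0)$ and $v=\psi\in C^2(\Sg-\Sg_0)$ to get $\mathcal{Q}(\phi^2\psi,\psi)=-\int_\Sg\phi^2\,\psi\,\ele(\psi)\,da$. Substituting into the identity above gives
\[
\mathcal{Q}(w,w)=\int_\Sg\mnh^{-1}\,\psi^2\,Z(\phi)^2\,da-\int_\Sg\phi^2\,\psi\,\ele(\psi)\,da.
\]
Under the hypothesis $\psi\,\ele(\psi)\leq 0$ both integrands are nonnegative, whence $\mathcal{Q}(w,w)\geq 0$. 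In the strict case $\psi\,\ele(\psi)<0$: since $w\neq 0$ forces $\phi\not\equiv 0$, continuity makes $\phi^2>0$ on a set of positive measure, so the second integral is strictly positive and $\mathcal{Q}(w,w)>0$. Finally, strong (resp. strict) stability of $\Sg-\Sg_0$ follows from \eqref{eq:empty}: any variation supported on $\Sg-\Sg_0$ satisfies $(A+2HV)''(0)=\mathcal{Q}(w,w)$, with associated normal component $w$, so the sign control of $\mathcal{Q}$ together with the fact that $w=0$ exactly when the velocity field is tangent yields the conclusion.

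I do not expect a genuine obstacle, since the argument is essentially algebraic once the substitution is in place. The only points requiring care are (a) verifying that $\phi=w/\psi$ retains compact support in the regular set and enough regularity ($C^1_0$) to legitimately invoke \eqref{eq:ibp}, which is precisely where the nowhere-vanishing hypothesis and the $C^2$ regularity of $\psi$ enter, and (b) checking that every integral is genuinely supported on $\Sg-\Sg_0$, where $\mnh$, $Z$ and $\ele$ are all defined, so that no contribution from the singular set is overlooked.
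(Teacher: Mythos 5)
Your proposal is correct and follows essentially the same route as the paper: the substitution $f=w/\psi$, the expansion of $\mathcal{Q}(f\psi,f\psi)$, the application of \eqref{eq:ibp} with $u=f^2\psi$ and $v=\psi$, and the resulting identity $\mathcal{Q}(w,w)=\int_\Sg\mnh^{-1}\psi^2Z(f)^2\,da-\int_\Sg f^2\psi\,\ele(\psi)\,da$ are exactly the steps in the paper's proof. The only cosmetic difference is that you handle the strict case by noting the second integral is positive, whereas the paper argues by contradiction that $\mathcal{Q}(w,w)=0$ forces $f=0$; both are valid.
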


\begin{proof}
We denote $q:=|B(Z)+S|^2+4\,(K-1)\,|N_{h}|^2$. Take any function $w\in C_0^1(\Sg-\Sg_0)$. We define $f:=w/\psi$. Clearly $f\in C^1_0(\Sg-\Sg_0)$. From equation \eqref{eq:indexform}, we get
\begin{align}
\nonumber
\mathcal{Q}(w,w)&=\mathcal{Q}(f\psi,f\psi)=\int_{\Sg}\mnh^{-1}
\left\{Z(f\psi)^2-q\,f^2\psi^2\right\}da
\\
\label{eq:exodus}
&=\int_{\Sg}\mnh^{-1}\left\{f^2\,Z(\psi)^2+\psi^2\,Z(f)^2+\psi\,Z(f^2)\,Z(\psi)-q\,f^2\psi^2\right\}da.
\end{align}
On the other hand, we can apply \eqref{eq:ibp} with $u=f^2\psi$ and $v=\psi$, so that we obtain
\begin{align*}
-\int_{\Sg} f^2\,\psi\,\ele(\psi)\,da&=\int_{\Sg}\mnh^{-1}\left\{Z(f^2\psi)\,Z(\psi)-q\,f^2\psi^2\right\}da
\\
&=\mathcal{Q}(w,w)-\int_{\Sg}\mnh^{-1}\psi^2\,Z(f)^2\,da,
\end{align*}
where in the second equality we have used \eqref{eq:exodus}. From the previous formula we conclude that
\begin{equation}
\label{eq:sorpasso}
\mathcal{Q}(w,w)=\int_{\Sg}\mnh^{-1}\left\{\psi^2\,Z(f)^2-\mnh\,\psi\,\ele(\psi)\,f^2\right\}da.
\end{equation}
Hence, the fact that $\psi\,\ele(\psi)\leq 0$ on $\Sg-\Sg_0$ implies that $Q(w,w)\geq 0$. Moreover, if $\psi\,\ele(\psi)<0$ on $\Sg-\Sg_0$ and $Q(w,w)=0$, then \eqref{eq:sorpasso} implies that $f=0$, and so $w=0$.

Finally, since $\Sg-\Sg_0$ is a CMC surface, we deduce by Proposition~\ref{prop:vpstationary} that $\Sg-\Sg_0$ is volume-preserving area-stationary (resp. area-stationary if $H=0$). Take any variation $\varphi$ of $\Sg-\Sg_0$ with velocity vector field $U$. By denoting $w:=\escpr{U,N}$ and applying the second variation formula in \eqref{eq:empty}, we infer that $(A+2HV)''(0)=Q(w,w)\geq 0$. This shows that $\Sg-\Sg_0$ is strongly stable. Moreover, if $\psi\,\ele(\psi)<0$ on $\Sg-\Sg_0$, then equality $(A+2HV)''(0)=0$ yields $w=0$, so that $U$ is everywhere tangent to $\Sg$. This completes the proof.
\end{proof}

As a consequence of Theorem~\ref{th:criterion} and Lemma~\ref{lem:jacobi} we can deduce the next result.

\begin{corollary}
\label{cor:criteria}
Let $M$ be a Sasakian sub-Riemannian $3$-manifold. Consider an oriented $C^2$ surface $\Sg$ immersed in $M$ such that $\Sg-\Sg_0$ is $C^3$ and has constant mean curvature $H$. Suppose  that one of the following conditions hold:
\begin{itemize}
\item[(i)] there is a nowhere vanishing Jacobi function $\psi$ on $\Sg$,
\item[(ii)] there are no vertical points in $\Sg$ $($the function $\escpr{N,T}$ is positive or negative on $\Sg$$)$,
\item[(iii)] $\ele(\mnh)\leq 0$ on $\Sg-\Sg_0$.
\end{itemize}
Then, the regular set $\Sg-\Sg_0$ is strongly stable. Moreover, if $\ele(\mnh)<0$ on $\Sg-\Sg_0$, then $\Sg-\Sg_0$ is strictly stable.
\end{corollary}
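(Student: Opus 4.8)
The plan is to deduce Corollary~\ref{cor:criteria} directly from Theorem~\ref{th:criterion}, since both results share the same hypotheses on $\Sg$ and the theorem already converts a sign condition on $\psi\,\ele(\psi)$ into strong (or strict) stability of $\Sg-\Sg_0$. The entire task is therefore to check, in each of the three cases, that one can produce a nowhere vanishing function $\psi\in C^2(\Sg-\Sg_0)$ satisfying $\psi\,\ele(\psi)\leq 0$ (and $<0$ for the strict case). In each case the appropriate $\psi$ is essentially handed to us by the earlier material.

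For case~(i) the argument is immediate: if there is a nowhere vanishing Jacobi function $\psi$ on $\Sg$, then by definition $\ele(\psi)=0$, hence $\psi\,\ele(\psi)=0\leq 0$, and Theorem~\ref{th:criterion} applies to give strong stability. Case~(ii) reduces to case~(i) via Lemma~\ref{lem:jacobi}: the function $\psi:=\escpr{N,T}$ is always a Jacobi function on $\Sg$, and the hypothesis that $\Sg$ has no vertical points is exactly the statement (using the first identity in \eqref{eq:relations}, namely $\mnh^2+\escpr{N,T}^2=1$) that $\escpr{N,T}$ never vanishes on $\Sg-\Sg_0$. Thus $\psi=\escpr{N,T}$ is a nowhere vanishing Jacobi function, and we are back in case~(i). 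I would also note that the continuity of $\escpr{N,T}$ forces it to have constant sign, which justifies the parenthetical remark in the statement.

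For case~(iii) I would take $\psi:=\mnh$. Since $\Sg_0=\emptyset$ is not assumed here, one works on the regular set $\Sg-\Sg_0$, where $\mnh>0$ by the very definition of $\Sg_0$ as the zero set of $\mnh$; thus $\psi$ is a nowhere vanishing $C^2$ function on $\Sg-\Sg_0$. The hypothesis $\ele(\mnh)\leq 0$ then gives $\psi\,\ele(\psi)=\mnh\,\ele(\mnh)\leq 0$ because $\mnh>0$, and strict inequality $\ele(\mnh)<0$ gives $\psi\,\ele(\psi)<0$. Applying Theorem~\ref{th:criterion} yields strong stability in the first case and strict stability in the second, which is exactly the final assertion of the corollary.

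I do not anticipate a genuine obstacle here, since each case is a one-line verification that the relevant $\psi$ meets the hypothesis of Theorem~\ref{th:criterion}; the only point requiring a small amount of care is the regularity bookkeeping in case~(iii), where $\psi=\mnh$ must be seen to lie in $C^2(\Sg-\Sg_0)$ so that $\ele(\psi)$ is defined, and the observation that all three arguments take place on the regular set even when the singular set is nonempty. The substance of the corollary is entirely carried by Theorem~\ref{th:criterion} and Lemma~\ref{lem:jacobi}; the corollary merely repackages them by exhibiting natural choices of the test function $\psi$.
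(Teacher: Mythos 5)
Your proposal is correct and coincides with the paper's own argument: the corollary is stated there as an immediate consequence of Theorem~\ref{th:criterion} and Lemma~\ref{lem:jacobi}, with exactly the three choices of test function you exhibit ($\psi$ itself in case (i), $\psi=\escpr{N,T}$ in case (ii), and $\psi=\mnh$, which is positive on $\Sg-\Sg_0$, in case (iii)). The regularity bookkeeping you flag is also handled as you suggest, since $\Sg-\Sg_0$ being $C^3$ makes both $\escpr{N,T}$ and $\mnh$ of class $C^2$ there.
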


\begin{example}[$t$-graphs]
Let $\Om\subeq\mathbb{N}(\kappa)$, $\kappa\leq 0$, be an open set. Consider a graph $\Sg\sub\e$ of the form $t=f(x,y)$, for some function $f\in C^2(\Om)$ which is also $C^3$ off of the singular set. If $\Sg$ has CMC then Corollary~\ref{cor:criteria} (i) gives strong stability of $\Sg-\Sg_0$. In the particular case of minimal graphs in $\mm(0)$ this is also a consequence of a calibration argument, see \cite[Sect.~5]{rr2} and \cite[Sect.~2]{bscv}.
\end{example}

\begin{remark}
In the setting of Carnot groups of arbitrary dimension, Montefalcone~\cite[Lem.~5.5]{montefalcone2} proved Lemma~\ref{lem:jacobi} for any left-invariant vertical field, and used it to obtain in \cite[Sect.~6]{montefalcone2} some criteria for strong/strict stability of minimal hypersurfaces similar to Corollary~\ref{cor:criteria} (i). On the other hand, Galli showed in \cite[Lem.~5.3]{galli2} that, if a complete minimal surface with empty singular set in the sub-Riemannian Sol manifold satisfies $\escpr{N,T}\leq 0$, then it is strongly stable. As it is shown in Example~\ref{ex:vertical} this fact need not hold in Sasakian sub-Riemannian $3$-manifolds.
\end{remark}

\section{Strongly stable surfaces with empty singular set}
\label{sec:helicoids}

In this section we obtain new examples of complete volume-preserving area-stationary surfaces with empty singular set in the model spaces $\e$. For that, we will produce suitable deformations of CMC vertical surfaces, see the Introduction for a motivation of our construction. Then, we will use Corollary~\ref{cor:criteria} (iii) to answer positively an open question from \cite[Re.~6.10]{rosales} regarding the existence of complete stable non-vertical surfaces in $\mm(-1)$ having empty singular set and constant mean curvature $H\in[0,1)$.

Let $\Ga:\rr\to\e$ be the vertical axis of $\e$. For any $\eps\in\rr$ we denote $X(\eps):=X_{\Ga(\eps)}$, $Y(\eps):=Y_{\Ga(\eps)}$ and $T(\eps):=T_{\Ga(\eps)}$, where $X$, $Y$ and $T$ are the vector fields introduced in Section~\ref{subsec:model}. Given a $C^n$ function $\sigma:\rr\to\rr$ with $n\in\nn$, we define a $C^n$ unit horizontal vector field along $\Ga$ by $U(\eps):=\cos\sg(\eps)\,X(\eps)+\sin\sg(\eps)\,Y(\eps)$. If $\kappa=-1,0$, then it is easy to check that $\sg(\eps)=\theta(\eps)+2\kappa\eps$, where $\theta(\eps)$ is the Euclidean angle between $U(\eps)$ and $\ptl_x$ in the plane generated by $\{\ptl_x,\ptl_y\}$. In the case $\kappa=1$, we have $\sg(\eps)=\theta(\eps)+\eps$, where $\theta(\eps)$ is the Euclidean angle between $U(\eps)$ and $\ptl_{x_2}$ in the plane generated by $\{\ptl_{x_2},\ptl_{y_2}\}$. Sometimes we will work with the angle function $\theta(\eps)$ instead of $\sg(\eps)$ since it provides a clearer geometric interpretation of our construction.

Fix a number $\la\in\rr$. For any $\eps\in\rr$, let $\ga_\eps:\rr\to\e$ be the CC-geodesic of curvature $\la$ with $\ga_\eps(0)=\Ga(\eps)$ and $\dot{\ga}_\eps(0)=U(\eps)$. The \emph{associated one-parameter flow} is the map $F:\rr^2\to\e$ given by $F(\eps,s):=\ga_\eps(s)$. We denote
\begin{equation}
\label{eq:sgla}
\Sg_{\la,\sg}:=F(\rr^2)=\{\ga_\eps(s)\,;\,(\eps,s)\in\rr^2\}.
\end{equation}
Note that $\Sg_{\la,\sg}$ is uniquely determined by the CC-geodesic $\ga_0$ and a one-parameter family of vertical screw motions associated to the angle function $\sg$.

The following is the main result of this section.

\begin{theorem}
\label{th:main1}
The set $\Sg_{\la,\sg}$ defined in \eqref{eq:sgla} satisfies the following properties:
\begin{itemize}
\item[(i)] If $\la^2+\kappa\leq 0$, then $\Sg_{\la,\sg}$ is a complete surface of class $C^n$ immersed in $\e$. Moreover, $\Sg_{\la,\sg}$ has empty singular set if and only if the angle function $\theta$ is nondecreasing.
\item[(ii)] If $\la^2+\kappa>0$, then $\Sg_{\la,\sg}$ is a complete surface of class $C^n$ immersed in $\e$ if and only if $\theta'\neq -(\la^2+\kappa)$ along $\Ga$ when $\kappa=-1,0$, or $\theta'\neq -\la^2$ along $\Ga$ when $\kappa=1$. In such a case, the singular set of $\Sg_{\la,\sg}$ is empty if and only if $\theta'>-(\la^2+\kappa)$ along $\Ga$ when $\kappa=-1,0$, or $\theta'>-\la^2$ along $\Ga$ when $\kappa=1$.
\item[(iii)] $\Sg_{\la,\sg}$ is a vertical surface if and only if $\kappa=-1,0$ and $\theta$ is constant along $\Ga$, or $\kappa=1$ and $\theta$ is a translation of the parameter along $\Ga$.
\item[(iv)] If $\Sg_{\la,\sg}$ is an immersed surface with empty singular set, then there is a Riemannian unit normal $N$ such that any complete CC-geodesic $\ga_\eps:\rr\to\e$ is a characteristic curve of $\Sg_{\la,\sg}$. Moreover, $\Sg_{\la,\sg}$ is a volume-preserving area-stationary surface with constant mean curvature $\la$ with respect to $N$.
\item[(v)] Suppose $\kappa=-1$ and $\la^2<1$. If the angle function $\theta$ is $C^3$ and satisfies $0\leq\theta'\leq 1-\la^2$ $($resp. $0\leq\theta'<1-\la^2$$)$ along $\Ga$, then $\Sg_{\la,\sg}$ is a complete strongly stable $($resp. strictly stable$)$ surface in $\mm(-1)$ with empty singular set.
\item[(vi)] Any complete, connected, $C^1$ orientable surface with empty singular set and constant mean curvature which contains a vertical line is congruent to some surface $\Sg_{\la,\sg}$.
\end{itemize}
\end{theorem}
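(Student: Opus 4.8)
The plan is to run the entire theorem through Lemma~\ref{lem:ccjacobi}, applied to the family $\ga_\eps$ with $\alpha=\Ga$ the vertical axis and $U(\eps)=\cos\sg(\eps)\,X(\eps)+\sin\sg(\eps)\,Y(\eps)$, so that $V_\eps=\ptl F/\ptl\eps$ is the CC-Jacobi field and $v_\eps=\escpr{V_\eps,T}$ governs everything. First I would pin down the initial data of $v_\eps$. Since $\dot\Ga=T$ is vertical and $U$ is horizontal, Lemma~\ref{lem:ccjacobi}(ii) reduces to $V_\eps=\la(1-v_\eps)\,\dot\ga_\eps+(v_\eps'/2)\,J(\dot\ga_\eps)+v_\eps\,T$, whence $v_\eps(0)=\escpr{T,T}=1$. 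Differentiating in $s$ with $D_UT=J(U)$ and \eqref{eq:dujv}, together with the identity $(D/d\eps)U=(\sg'+1-2\kappa)\,J(U)$ that follows from the bracket relations \eqref{eq:lb}, I obtain $v_\eps'(0)=0$ and $v_\eps''(0)=2(\sg'-2\kappa)$, which equals $2\theta'$ for $\kappa=-1,0$ and $2(\theta'-1)$ for $\kappa=1$. Feeding $(v_\eps(0),v_\eps'(0),v_\eps''(0))$ into the explicit solutions of $v_\eps'''+\tau v_\eps'=0$ from Lemma~\ref{lem:ccjacobi}(iii), with $\tau=4(\la^2+\kappa)$, produces a closed form for $v_\eps$ in each sign regime of $\tau$.

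The geometric dictionary is that $\dot\ga_\eps$ is horizontal and tangent to $\Sg_{\la,\sg}$, so $F$ is an immersion at $(\eps,s)$ precisely when $V_\eps,\dot\ga_\eps$ are independent, i.e. $(v_\eps,v_\eps')\neq(0,0)$, and a point is singular precisely when its tangent plane is horizontal, i.e. $v_\eps=0$. For $\tau\leq 0$ the closed form $1+(v_\eps''(0)/|\tau|)(\cosh(\sqrt{|\tau|}\,s)-1)$, or $1+(v_\eps''(0)/2)\,s^2$, never has a common zero of $v_\eps$ and $v_\eps'$, so $\Sg_{\la,\sg}$ is always immersed, and $v_\eps>0$ everywhere exactly when $\theta'\geq 0$; this is (i). For $\tau>0$ the form $1+(v_\eps''(0)/\tau)(1-\cos(\sqrt\tau\,s))$ admits a common zero only when $v_\eps''(0)=-\tau/2$, and stays away from $0$ exactly when $v_\eps''(0)>-\tau/2$; translating through the $\theta$-expressions for $v_\eps''(0)$ yields the thresholds $\theta'\neq-(\la^2+\kappa)$, $\theta'>-(\la^2+\kappa)$ (and their $\kappa=1$ versions) in (ii). Statement (iii) is immediate, since $T$ is tangent iff $v_\eps'=0$, so $\Sg_{\la,\sg}$ is vertical iff $v_\eps'\equiv 0$ iff $v_\eps''(0)\equiv 0$, i.e. $\sg'\equiv 2\kappa$, which is the stated condition on $\theta$. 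Completeness I would read from the induced metric $F^*g=E\,d\eps^2+2A\,d\eps\,ds+ds^2$ with $A=\la(1-v_\eps)$ and $E-A^2=v_\eps^2+(v_\eps'/2)^2$ staying positive, the $s$-lines being unit-speed complete rulings, as in \cite{rosales}.

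For (iv), on the regular set $\dot\ga_\eps$ spans $T_p\Sg\cap\h_p$, so we may fix the unit normal $N$ with $Z=\dot\ga_\eps$; since $\ga_\eps$ solves \eqref{eq:geoeq} we get $D_ZZ=2\la\,\nuh$, hence $2H=\escpr{D_ZZ,\nuh}=2\la$ by \eqref{eq:mc2}, and Proposition~\ref{prop:vpstationary} gives that $\Sg_{\la,\sg}$ is volume-preserving area-stationary with constant mean curvature $\la$. For (v) I would compute $\ele(\mnh)$ directly: combining \eqref{eq:znt}, \eqref{eq:zmnh} and the formula for $Z(\escpr{B(Z),S})$ from \cite{rosales} first yields the general identity $\ele(\mnh)=4\,\mnh^{-2}\,\escpr{B(Z),S}+4(H^2+K-1)$; substituting $\mnh=v_\eps/\sqrt{v_\eps^2+(v_\eps'/2)^2}$ and $\escpr{B(Z),S}=1+(v_\eps v_\eps''-(v_\eps')^2)/(2(v_\eps^2+(v_\eps'/2)^2))$, and then the hyperbolic closed form ($\kappa=-1$, $\tau<0$), the $\cosh$-terms cancel and leave $\ele(\mnh)=4(\theta'-(1-\la^2))/v_\eps^2$. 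Thus $0\leq\theta'\leq 1-\la^2$ forces empty singular set by (i) and $\ele(\mnh)\leq 0$, so Corollary~\ref{cor:criteria}(iii) gives strong stability, with the strict inequality giving strict stability.

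The hard part will be the classification (vi). Given $\Sg$ of constant mean curvature $\la$ containing a vertical line, after a congruence I may take the line to be $\Ga$. By Proposition~\ref{prop:vpstationary} the characteristic curve through $\Ga(\eps)$ is the CC-geodesic $\ga_\eps$ of curvature $\la$ with $\dot\ga_\eps(0)=Z_{\Ga(\eps)}=:U(\eps)$, and expanding $U(\eps)=\cos\sg(\eps)X(\eps)+\sin\sg(\eps)Y(\eps)$ recovers the angle function; completeness of $\Sg$ makes each $\ga_\eps$ complete inside $\Sg$, so $\Sg_{\la,\sg}\subseteq\Sg$. The real work is the reverse inclusion. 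I would show $\Sg_{\la,\sg}$ is open in $\Sg$ by a flow-box argument for the characteristic foliation: $\Ga$ is a complete transversal (as $\dot\Ga=T$ and $Z$ are independent), so flowing any point near a given $\ga_{\eps_0}(s_0)$ backwards by the characteristic flow lands it on a leaf meeting $\Ga$, hence on some $\ga_\eps$. Since $\Sg_{\la,\sg}$ is itself complete by (i)--(ii), it is closed in $\Sg$, and connectedness of $\Sg$ forces $\Sg=\Sg_{\la,\sg}$. The delicate points, which I expect to absorb most of the effort, are checking that the extracted $\sg$ inherits enough regularity to serve as an admissible angle function, and that no characteristic leaf degenerates before returning to $\Ga$; both are controlled by the fact that the CC-Jacobi data along each $\ga_\eps$ coincide with the $v_\eps(0)=1$, $v_\eps'(0)=0$ data computed in the first paragraph, so that analysis applies verbatim here.
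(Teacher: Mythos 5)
Your proposal is correct and follows essentially the same route as the paper: the CC-Jacobi field of Lemma~\ref{lem:ccjacobi} with initial data $v_\eps(0)=1$, $v_\eps'(0)=0$, $v_\eps''(0)=2\sigma'(\eps)-4\kappa$, the resulting closed forms of $v_\eps$ in the three sign regimes of $\tau$ to read off immersedness, the singular set and verticality, the explicit normal making each $\ga_\eps$ a characteristic curve for (iv), the reduction of (v) to $v_\eps^2\,\ele(\mnh)=2v_\eps v_\eps''-(v_\eps')^2+\tau v_\eps^2=4(\theta'+\la^2-1)$ combined with Corollary~\ref{cor:criteria}~(iii), and the characteristic-curve argument for (vi). The only divergences are cosmetic: the paper obtains the constancy of $2v_\eps v_\eps''-(v_\eps')^2+\tau v_\eps^2$ from the Jacobi ODE rather than by direct substitution of the hyperbolic closed form, and your open/closed elaboration of (vi) and the induced-metric completeness check merely spell out steps the paper leaves implicit.
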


\begin{proof}
The flow $F(\eps,s)=\ga_\eps(s)$ is $C^n$ since $U(\eps)$ is $C^n$ and, for fixed $\la\in\rr$, the solutions of the geodesic equation \eqref{eq:geoeq} depends differentiably on the initial data. Note that $(\ptl F/\ptl s)(\eps,s)=\dot{\ga}_\eps(s)$, which is a horizontal vector. We denote $V_\eps(s):=(\ptl F/\ptl\eps)(\eps,s)$. From Lemma~\ref{lem:ccjacobi} (i) it follows that $V_\eps$ is a $C^\infty$ vector field along $\ga_\eps$ with $[\dot{\ga}_\eps,V_\eps]=0$. It is also clear that $V_\eps(0)=\dot{\Ga}(\eps)=T(\eps)$. Let $v_\eps:=\escpr{V_\eps,T}$. From Lemma~\ref{lem:ccjacobi} (ii) we deduce that the expression of $V_\eps$ with respect to the orthonormal frame $\{\dot{\ga}_\eps,J(\dot{\ga}_\eps),T\}$ is given by
\begin{equation}
\label{eq:vtheta2}
V_\eps=\lambda\,(1-v_\eps)\,\dot{\ga}_\eps+(v'_\eps/2)\,J(\dot{\ga}_\eps)+v_\eps\,T.
\end{equation}
Evaluating the previous expression at $s=0$ gives $v_\eps'(0)=0$ since $v_\eps(0)=1$. By taking covariant derivatives along $\ga_\eps$ we have
\[
V_\eps'(0)=(1+v_\eps''(0)/2)\,J(U(\eps)),
\]
since $D_{\dot{\ga}_\eps(0)}T=J(\dot{\ga}_\eps(0))=J(U(\eps))$. On the other hand, the equality $[\dot{\ga}_\eps,V_\eps]=0$ implies $V_\eps'=D_{\dot{\ga}_\eps}V_\eps=D_{V_\eps}\dot{\ga}_\eps$ along $\ga_\eps$. As a consequence
\begin{align*}
V_\eps'(0)&=\frac{d}{d\eps}\bigg|_\eps\,\,\dot{\ga}_\eps(0)=\frac{d}{d\eps}\bigg|_\eps\,U(\eps)=\frac{d}{d\eps}\bigg|_\eps\,\big(\!\cos \sigma(\eps)\,X(\eps)+\sin
\sigma(\eps)\,Y(\eps)\big)
\\
&=-\sigma'(\eps)\,\sin\sigma(\eps)\,X(\eps)+\cos\sigma(\eps)\,X'(\eps)
+\sigma'(\eps)\,\cos\sigma(\eps)\,Y(\eps)+\sin\sigma(\eps)\,Y'(\eps).
\end{align*}
Note that
\begin{align*}
X'&=D_{\dot{\Ga}}X=D_{T}X=D_{X}T+[T,X]=J(X)-[X,T]=(1-2\kappa)\,Y,
\\
Y'&=D_{\dot{\Ga}}Y=D_{T}Y=D_{Y}T+[T,Y]=J(Y)-[Y,T]=-(1-2\kappa)\,X,
\end{align*}
where we have used \eqref{eq:lb}. By substituting into the previous equation, we infer
\[
V_\eps'(0)=(\sigma'(\eps)+1-2\kappa)\,J(U(\eps)).
\]
Now, if we compare the two expressions for $V_\eps'(0)$ above, then we get $v''_\eps(0)=2\sigma'(\eps)-4\kappa$.

Once we know that $v_\eps(0)=1$, $v_\eps'(0)=0$ and $v''_\eps(0)=2\sigma'(\eps)-4\kappa$, we can apply Lemma~\ref{lem:ccjacobi} (iii) to obtain an explicit expression for $v_\eps$. We denote $\tau:=4\,(\lambda^2+\kappa)$ and distinguish three cases.

\emph{Case 1.} If $\tau<0$, then $\kappa=-1$, and we have
\begin{equation*}
\label{eq:tau<0}
v_\eps(s)=\frac{2\,\sg'(\eps)+4}{\tau}\,\big(1-\cosh(\sqrt{-\tau}\,s)\big)+1=
\frac{2\,\theta'(\eps)}{\tau}\,\big(1-\cosh(\sqrt{-\tau}\,s)\big)+1,
\end{equation*}
since $\sg(\eps)=\theta(\eps)-2\eps$. In particular, $v_\eps$ and $v_\eps'$ never vanish simultaneously. By \eqref{eq:vtheta2} it follows that $V_\eps$ cannot be proportional to $\dot{\ga}_\eps$ and so, the differential of the flow $F:\rr^2\to\e$ has always rank two. This shows that $\Sg_{\la,\sg}$ is a $C^n$  surface immersed in $\e$. Moreover, the singular set of $\Sg_{\la,\sg}$ consists of the points $\ga_\eps(s)$ such that $v_\eps(s)=0$. Clearly the condition $\theta'(\eps)\geq 0$ implies that $v_\eps(s)\geq 1$ for any $s\in\rr$. Indeed, if $\theta'(\eps_0)<0$, then $v_{\eps_0}(s_0)=0$ for some $s_0\in\rr$ since $-2\theta'(\eps_0)/\tau<0$ and the function $\omega:\rr\to\rr$ given by $\omega(s):=1-\cosh(\sqrt{-\tau}\,s)$ satisfies $\omega(\rr)=(-\infty,0]$. From this analysis we conclude that $\Sg_{\la,\sg}$ has empty singular set if and only if $\theta'(\eps)\geq 0$, for any $\eps\in\rr$.

\emph{Case 2.} If $\tau=0$, then $\kappa\leq 0$ and
\[
v_\eps(s)=(\sg'(\eps)-2\kappa)\,s^2+1=\theta'(\eps)\,s^2+1,
\]
since $\sg(\eps)=\theta(\eps)+2\kappa\eps$. We can proceed as in the previous case to get that $\Sg_{\la,\sg}$ is a $C^n$ surface immersed in $\e$, and that it has empty singular set if and only if $\theta'(\eps)\geq 0$ for any $\eps\in\rr$. This proves statement (i) of the theorem.

\emph{Case 3.} If $\tau>0$, then we get
\begin{align*}
v_\eps(s)&=\frac{2\,\sg'(\eps)-4\kappa}{\tau}\,\big(1-\cos(\sqrt{\tau}\,s)\big)+1
\\
\nonumber
&=
\begin{cases}
\frac{2\theta'(\eps)}{\tau}\,\big(1-\cos(\sqrt{\tau}\,s)\big)+1, \hspace{0.930cm}\text{ if } \kappa=-1,0,
\vspace{0,1cm}
\\
\frac{2(\theta'(\eps)-1)}{\tau}\,\big(1-\cos(\sqrt{\tau}\,s)\big)+1, \quad\text{ if } \kappa=1.
\end{cases}
\end{align*}
By equation \eqref{eq:vtheta2}, the map $F$ fails to be an immersion at $(\eps,s)\in\rr^2$ if and only if $v_\eps(s)=v'_\eps(s)=0$. This is equivalent to that $\theta'(\eps)=-(\la^2+\kappa)$ and $s=(2m+1)\pi/\sqrt{\tau}$ ($m\in\mathbb{Z}$) if $\kappa=-1,0$, or $\theta'(\eps)=-\la^2$ and $s=(2m+1)\pi/\sqrt{\tau}$ ($m\in\mathbb{Z}$) if $\kappa=1$. If $F$ is an immersion, then the singular set of $\Sg_{\la,\sg}$ consists of the points $\ga_\eps(s)$ such that $v_\eps(s)=0$. Denoting $m(\eps):=(2\sg'(\eps)-4\kappa)/\tau$, and taking into account that the function $\omega:\rr\to\rr$ defined by $\omega(s):=1-\cos(\sqrt{\tau}\,s)$ satisfies $\omega(\rr)=[0,2]$, it is straightforward to check that $v_\eps(s)\neq 0$ for any $s\in\rr$ if and only if $m(\eps)>-1/2$. Hence, the claim in (ii) easily follows since $\sg'=\theta'+2\kappa$ when $\kappa=-1,0$ and $\sg'=\theta'+1$ when $\kappa=1$.

Suppose now that $\Sg_{\la,\sg}$ is an immersed surface. We take a point $p=\ga_\eps(s)\in\Sg_{\la,\sg}$. Since the tangent plane at $p$ is generated by $\{\dot{\ga}_\eps(s),V_\eps(s)\}$ and the functions $v_\eps(s)$, $v_\eps'(s)$ do not vanish simultaneously, it follows from \eqref{eq:vtheta2} that $p$ is a vertical point of $\Sg_{\la,\sg}$ if and only if $v_\eps'(s)=0$. This allows to describe the set of vertical points after an explicit computation of $v_\eps'(s)$. If $\tau\leq 0$ this set is the union of $\Ga$ together with the CC-geodesics $\ga_\eps$ for which $\theta'(\eps)=0$. If $\tau>0$, then $\ga_\eps(s)$ is vertical if and only if $s=m\pi/\sqrt{\tau}$ with $m\in\mathbb{Z}$, or $\theta'(\eps)=0$ and $\kappa=-1,0$, or $\theta'(\eps)=1$ and $\kappa=1$. From here we deduce statement (iii).

In order to prove (iv) we define the $C^{n-1}$ vector field
\begin{equation}
\label{eq:normal}
N(\eps,s):=\frac{-v_\eps(s)\,J(\dot{\ga}_\eps(s))+(v_\eps'(s)/2)\,T}{\sqrt{v_\eps(s)^2+(v'_\eps(s)/2)^2}},
\end{equation}
which clearly provides a unit normal along $\Sg_{\la,\sg}$. Since we assume that the singular set is empty, then $v_\eps(s)>0$ for any $(\eps,s)\in\rr^2$. From the definition of $\nuh$ and $Z$ in \eqref{eq:nuh}, we get the equalities
\[
\nuh(\eps,s)=-J(\dot{\ga}_\eps(s)), \quad Z(\eps,s)=\dot{\ga}_\eps(s),
\]
and so, any complete CC-geodesic $\ga_\eps$ is a characteristic curve of $\Sg_{\la,\sg}$. By equations \eqref{eq:mc} and \eqref{eq:geoeq} we infer
\[
2H=\escpr{D_ZZ,\nuh}=\escpr{\dot{\ga}_\eps',\nuh}=-2\la\,\escpr{J(\dot{\ga}_\eps),\nuh}=2\la,
\]
from which $\Sg_{\la,\sg}$ has constant mean curvature $\la$ with respect to $N$. We conclude that $\Sg_{\la,\sg}$ is volume-preserving area-stationary by Proposition~\ref{prop:vpstationary}.

Let us prove (v). If we assume $\kappa=-1$, $\la^2<1$ and $\theta'\geq 0$, then statement (i) implies that $\Sg_{\la,\sg}$ is a complete $C^3$ surface immersed in $\mm(-1)$ with empty singular set. By statement (iv) the surface $\Sg_{\la,\sg}$ is volume-preserving area-stationary with constant mean curvature $\la$, and any CC-geodesic $\ga_\eps$ is a characteristic curve. To deduce strong (resp. strict) stability it suffices, by Corollary~\ref{cor:criteria} (iii), to check that $\ele(\mnh)\leq 0$ (resp. $\ele(\mnh)<0$) along $\Sg_{\la,\sg}$. If we denote $\tau:=4\,(\la^2-1)$, then we have the equality
\begin{equation}
\label{eq:stable1}
\ele(\mnh)=4\,\mnh^{-2}\,\escpr{B(Z),S}+\tau-4,
\end{equation}
which follows from \cite[Lem.~6.5]{rosales}. Next, we will obtain an explicit expression for $\ele(\mnh)$ depending on the angle function $\theta$. Equation~\eqref{eq:normal} gives us
\begin{equation*}
\label{eq:pistola1}
\escpr{N,T}=\frac{v'_\eps}{\sqrt{4\,v^2_\eps+(v_\eps')^2}}, \quad
\mnh=\frac{2\,v_\eps}{\sqrt{4\,v^2_\eps+(v_\eps')^2}}.
\end{equation*}
By taking into account \eqref{eq:znt} and that any $\ga_\eps$ is a characteristic curves of $\Sg_{\la,\sg}$, we infer
\begin{equation*}
\label{eq:pistola2}
\escpr{B(Z),S}=\frac{Z(\escpr{N,T})}{\mnh}+1=\frac{2\,v_\eps\,v_\eps''+4\,v_\eps^2-(v_\eps')^2}{4\,v_\eps^2+(v_\eps')^2}.
\end{equation*}
By substituting the previous equalities into \eqref{eq:stable1}, we deduce
\[
v_\eps^2\,\ele(\mnh)=2\,v_\eps\,v_\eps''-(v_\eps')^2+\tau\,v_\eps^2.
\]
By differentiating above with respect to $s$, and having in mind the differential equation in Lemma~\ref{lem:ccjacobi} (iii), we get that $2v_\eps v_\eps''-(v_\eps')^2+\tau v_\eps^2$ is constant along $\ga_\eps$. By evaluating at $s=0$ and using that $v_\eps(0)=1$, $v_\eps'(0)=0$ and $v''_\eps(0)=2\sg'(\eps)+4=2\theta'(\eps)$, we conclude that
\[
v_\eps^2\,\ele(\mnh)=4\,\big(\theta'(\eps)+\la^2-1\big).
\]
Therefore $\ele(\mnh)\leq 0$ (resp. $\ele(\mnh)<0$) if and only if $\theta'\leq 1-\la^2$ (resp. $\theta'<1-\la^2$) along $\Ga$.

It remains to prove (vi). Consider a complete, connected, orientable surface $\Sg$ immersed in $\e$ with $\Sg_0=\emptyset$ and constant mean curvature $\la$. Suppose that $\Sg$ contains a vertical line. After a left or right translation in $\e$ the surface $\Sg$ is congruent to a surface $\Sg'$ satisfying the same properties and containing the vertical axis. From Proposition~\ref{prop:vpstationary} the characteristic curve $\ga_\eps$ of $\Sg'$ passing through $\Ga(\eps)$ is a CC-geodesic of curvature $\la$ with $\dot{\ga}_\eps(0)=\cos\sg(\eps)\,X(\eps)+\sin\sg(\eps)\,Y(\eps)$, for some angle function $\sg(\eps)$. Since $\Sg'$ is complete and connected with $\Sg'_0=\emptyset$ then we obtain $\Sg'=\Sg_{\la,\sg}$. This shows that $\Sg$ is congruent to $\Sg_{\la,\sg}$, as we claimed.
\end{proof}

We finish this section with some examples and remarks concerning the previous theorem.

\begin{example}[Surfaces $\Sg_{\la,\sg}$ in $\mm(0)$]
It is well known that a CC-geode\-sic of vanishing curvature in the Heisenberg group $\mm(0)$ is an affine parameterization of a horizontal straight line. Thus, for any angle function $\sg$, the surface $\Sg_{0,\sg}$ is the union of a family of horizontal lines contained in parallel planes. Since $\Ga(\eps)=(0,0,\eps)$ and $U(\eps)=(\cos\sg(\eps),\sin\sg(\eps),0)$, it follows that
\[
\Sg_{0,\sg}=\{\Ga(\eps)+s\,U(\eps)\,;\,(\eps,s)\in\rr^2\}=\{(x,y,t)\in\rr^3\,;\,x\,\sin\sg(t)-y\,\cos\sg(t)=0\}.
\]
Observe that, for $\sg(\eps)=C\eps$ with $C>0$, we get a right handed helicoid in $\rr^3$. More generally, any $\Sg_{0,\sg}$ is one of the embedded helicoid type minimal surfaces in $\mm(0)$ described by Cheng and Hwang~\cite[Thm.~B]{ch}. The fact that the singular set of $\Sg_{0,\sg}$ is empty if and only if $\sg'\geq 0$ along $\Ga$ is contained in \cite[Thm.~C]{ch}. We also note that the entire graphical strips defined by Danielli, Garofalo, Nhieu and Pauls~\cite[Def.~1.3]{dgnp} in relation to the Bernstein problem in $\mm(0)$ coincide with the surfaces $\Sg_{0,\sg}$ having empty singular set and satisfying $\cos\sg\neq 0$ along $\Ga$ or $\sin\sg\neq 0$ along $\Ga$.

To the best of our knowledge the surfaces $\Sg_{\la,\sg}$ such that $\la\neq 0$ and $\sg$ is a non-constant function with $\sg'\geq 0$ provide new examples of complete volume-preserving area-stationary surfaces in $\mm(0)$ with empty singular set. These surfaces are never compact since they contain $\Ga$. In general the surfaces $\Sg_{\la,\sg}$ are not embedded. From \cite[Cor.~6.9]{rosales} the unique stable surfaces $\Sg_{\la,\sg}$ of class $C^2$ with empty singular set are vertical planes. By a recent work of Galli and Ritor\'e~\cite{galli-ritore3} the vertical planes are also the only strongly stable surfaces $\Sg_{0,\sg}$ of class $C^1$ in $\mm(0)$ with empty singular set.
\end{example}

\begin{example}[Surfaces $\Sg_{\la,\sg}$ in $\mm(-1)$]
\label{ex:minhyper}
Let $p\in\mm(-1)$ be a point on $\Ga$, and $v\in\mathcal{H}_p$ a unit vector. It follows easily from \eqref{eq:geoeq}, see also \cite[Lem.~3.1]{rosales}, that the CC-geode\-sic $\ga$ in $\mm(-1)$ of vanishing curvature with $\ga(0)=p$ and $\dot{\ga}(0)=v$ is given by $\ga(s)=p+\tanh(s)\,v$, for any $s\in\rr$. Hence, for any angle function $\sg$, we have
\begin{equation}
\label{eq:tq}
\Sg_{0,\sg}=\{\Ga(\eps)+\tanh(s)\,U(\eps)\,;\,(\eps,s)\in\rr^2\}=\{(x,y,t)\in\mm(-1)\,;\,x\,\sin\theta(t)-y\,\cos\theta(t)=0\}.
\end{equation}
From Theorem~\ref{th:main1} (v), for any $\theta\in C^3$ with $0\leq\theta'\leq 1$ the surface $\Sg_{0,\sg}$ is strongly stable and has empty singular set. This is a remarkable difference with respect to $\mm(0)$, where the vertical planes are the only complete strongly stable minimal surfaces with empty singular set, see \cite{hrr}, \cite{dgnp-stable}, \cite{galli-ritore3}.

We can produce strongly stable non-minimal surfaces in $\mm(-1)$ as follows. Choose numbers $\la\in (-1,1)$ and $C\in (0,1-\la^2]$. Consider the surface $\Sg_{\la,\sg}$ where $\sg(\eps)=(C-2)\eps$ along $\Ga$. This is an embedded right handed minimal helicoid when $\la=0$. Note that $\theta(\eps)=C\eps$, so that Theorem~\ref{th:main1} implies that $\Sg_{\la,\sg}$ is a $C^\infty$ complete, orientable, strongly stable, non-vertical surface in $\mm(-1)$ with constant mean curvature $\la$ and empty singular set.
\end{example}

\begin{example}[The Bernstein problem in $\mm(-1)$]
\label{ex:bernstein}
The classical Bernstein problem studies entire minimal graphs in Euclidean space. In $\rr^3$ it is well-known that the planes are the unique solutions to this problem. In the Heisenberg group $\mm(0)$ it is possible to find entire minimal graphs (Euclidean or intrinsic) different from planes. Indeed, the best Bernstein type result in $\mm(0)$ establishes that an entire strongly stable minimal graph with empty singular set must be a vertical plane, see \cite[Thm.~1.8]{dgnp} and \cite[Thm.~5.3]{bscv}. To see that this rigidity result does not hold in $\mm(-1)$, it suffices to consider a surface $\Sg_{0,\sg}$ in $\mm(-1)$ with angle function $\sg$ so that $\theta\in (-\pi/2,\pi/2)$ and $0\leq\theta'\leq 1$ along $\Ga$ (there is a continuum of these functions). From \eqref{eq:tq} we have
\[
\Sg_{0,\sg}=\{(x,y,t)\in\mm(-1)\,;\,y=x\,\tan\theta(t)\}.
\]
It follows from Theorem~\ref{th:main1} that $\Sg_{0,\sg}$ is a strongly stable entire minimal graph with empty singular set. In the particular case $\theta(\eps)=\arctan(\eps)$ we get the Euclidean graph $y=xt$.
\end{example}

\begin{example}[Surfaces $\Sg_{\la,\sg}$ in $\mm(1)$]
\label{ex:newtori}
By the results of Cheng, Hwang, Malchiodi and Yang~\cite{chmy} a compact connected $C^2$ surface $\Sg$ in $\mm(1)$ with constant mean curvature $\la$ and $\Sg_0=\emptyset$ is topologically a torus. From \cite[Thm.~5.10, Prop.~5.11]{hr1}, if $\Sg$ is vertical or $\la/\sqrt{1+\la^2}\notin\mathbb{Q}$, then $\Sg$ is congruent to a Clifford torus $\mathcal{T}_r:=\s^1(r)\times\s^1(\sqrt{1-r^2})$. Rotationally symmetric CMC tori which are not congruent to $\mathcal{T}_r$ were described in \cite[Thm.~6.4]{hr1}. Let us see how we can produce new examples of CMC tori which are neither congruent to $\mathcal{T}_r$ nor rotationally symmetric.

The vertical axis in $\mm(1)$ is the great circle $\Ga(\eps)=(\cos\eps,\sin\eps,0,0)$. Fix $\la\in\rr$ such that $\la/\sqrt{1+\la^2}\in\mathbb{Q}$. This ensures by \cite[Prop.~3.3]{hr1} that all the CC-geodesics in $\mm(1)$ of curvature $\la$ are embedded circles of the same length. Take an angle function $\sg$ such that $U:=(\cos\sg)\,X+(\sin\sg)\,Y$ is $2\pi$-periodic and $\theta'>-\la^2$ along $\Ga$. From Theorem~\ref{th:main1} the surface $\Sg_{\la,\sg}$ is an immersed torus with constant mean curvature $\la$ and empty singular set. Since $\Ga\sub\Sg_{\la,\sg}$ then $\Sg_{\la,\sg}$ is not rotationally symmetric about $\Ga$. Moreover, if $\theta$ is not a translation along $\Ga$, then $\Sg_{\la,\sg}$ cannot be vertical and so, it is not congruent to $\mathcal{T}_r$. In general these surfaces $\Sg_{\la,\sg}$ are not embedded even if $\la=0$. As a consequence of the stability result in \cite[Cor.~6.9]{rosales} none of these examples is stable.
\end{example}

\begin{remarks}
1. Let $M$ be an arbitrary $3$-dimensional space form of Webster scalar curvature $\kappa$. As was pointed out in \cite[Prop.~2.1]{hr2} there is a surjective local isometry $\Pi:\e\to M$. From here we can define the surfaces $\Sg_{\la,\sg}$ and prove Theorem~\ref{th:main1} in this more general setting.

2. It is natural to ask if other criteria different from Corollary~\ref{cor:criteria} (iii) may be used to deduce strong/strict stability of $\Sg_{\la,\sg}$ when $\kappa=-1$, $\la^2<1$ and $\theta'\geq 0$. Note that the sufficient condition in Corollary~\ref{cor:criteria} (ii) never holds on $\Sg_{\la,\sg}$ since any $p\in\Ga$ is vertical. As to the condition in Remark~\ref{re:trivial}, we can follow the arguments in the proof of Theorem~\ref{th:main1} to show that the function $q:=|B(Z)+S|^2-8\,\mnh^2$ satisfies
\[
\frac{\big(4\,v^2_\eps+(v'_\eps)^2\big)^2}{4\,v_\eps^2}\,q=4\,\big(\theta'(\eps)^2+4\,\theta'(\eps)+\tau\big).
\]
Now, it is easy to conclude that the inequality $q\leq 0$ implies $\theta'\leq1-\la^2$ along $\Ga$, thus recovering the stability condition in Theorem~\ref{th:main1} (v).
\end{remarks}

\section{Strong stability of surfaces with non-empty singular set}
\label{sec:nonempty}

In this section we use our stability criteria to deduce stability properties of complete $C^2$ volume-preserving area-stationary surfaces with non-empty singular set in a $3$-dimensional space form. In this setting there are rigidity results, see \cite[Sect.~4]{hr2} and the references therein, showing that these surfaces are uniquely determined by their singular set and their mean curvature. As was shown in \cite{chmy}, see also \cite[Sect.~5]{galli}, the singular set of a CMC surface of class $C^2$  consists of isolated points and/or curves with non-vanishing tangent vector. This leads us to divide this section in two parts, where we analyze the two cases separately.

\subsection{Surfaces with isolated singular points}
\label{subsec:planes}
\noindent

Let $M$ be a $3$-dimensional space form of Webster scalar curvature $\kappa$. For any point $p\in M$ and any number $\la\geq 0$, we define
\begin{align}
\label{eq:planes}
\pla&:=\big\{\ga_v(s)\,;\,v\in\h_p,\,|v|=1,\,s\geq 0\}, \hspace{2.17cm} \text{ if } \la^2+\kappa\leq 0,
\\
\label{eq:spheres}
\sla(p)&:=\big\{\ga_v(s)\,;\,v\in\h_p,\,|v|=1,\,s\in[0,\pi/\sqrt{\la^2+\kappa}]\}, \ \text{ if } \la^2+\kappa>0.
\end{align}
Here $\ga_v:\rr\to M$ denotes the CC-geodesic of curvature $\la$ with $\ga_v(0)=p$ and $\dot{\ga}_v(0)=v$.

The sets $\pla$ and $\sla(p)$ were studied in \cite[Sect.~3, Sect.~4.2]{hr2}. We proved that they are the unique complete, connected, orientable, $C^2$ surfaces of constant mean curvature $\la$ in $M$ with at least one isolated singular point. The surfaces $\sla(p)$ are generalizations of the well-known Pansu spheres in the first Heisenberg group $\mm(0)$. The surfaces $\pla$ are volume-preserving area-stationary immersed planes, whose singular set is the point $p$, called the \emph{pole} of $\pla$. In $\mm(0)$ any $\pla$ is a horizontal Euclidean plane $\mathcal{P}_0(p)$. In $\mm(1)$ there are no planes $\pla$. In $\mm(-1)$, up to left translations, there is a unique plane $\pla$ (which is embedded) for any $\la\in [0,1]$. All the planes $\pla$ are $C^\infty$ surfaces off of the pole.

The stability properties of $\sla(p)$ were analyzed in \cite[Sect.~5]{hr2}, where it is shown that $\sla(p)$ is stable but not strongly stable. In this section we will prove that all the planes $\pla$ are strictly stable. We first provide some basic facts and computations that will be helpful in the sequel.

\begin{lemma}
\label{lem:comppla}
Consider a plane $\pla$ immersed in a $3$-dimensional space form $M$ of Webster scalar curvature $\kappa$. Let $N$ be the unit normal along $\pla$ with associated mean curvature $H=\la$. Let $\{Z,S\}$ be the orthonormal basis defined in \eqref{eq:nuh} and \eqref{eq:ese}, and $B$ the Riemannian shape operator with respect to $N$. Denote $\mu:=\sqrt{-(\la^2+\kappa)}$. Then, we have:
\begin{itemize}
\item[(i)] the function $\mnh^{-1}$ is locally integrable on $\pla$ with respect to $da$,
\item[(ii)] $\escpr{N,T}>0$ on $\pla$ $($in particular, there are no vertical points along $\pla$$)$,
\item[(iii)] $\escpr{B(Z),S}=(1+\mu^2)\,\mnh^2$ on $\pla-\{p\}$,
\item[(iv)] $|B(Z)+S|^2+4\,(\kappa-1)\,\mnh^2=\big(1-(1+\mu^2)\,\mnh^2\big)^2$ on $\pla-\{p\}$,
\item[(v)] $\escpr{B(S),S}=\la\,\mnh\,\big(1-(1+\mu^2)\,\mnh^2\big)$ on $\pla-\{p\}$.
\end{itemize}
\end{lemma}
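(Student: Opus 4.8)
The plan is to realize $\pla$ as an exponential-type image of CC-geodesics leaving the pole and to extract every quantity in (i)--(v) from the associated CC-Jacobi field. By homogeneity I would place $p$ on the vertical axis and define $F:[0,2\pi)\times[0,\infty)\to M$ by $F(\eps,s):=\ga_\eps(s)$, where $\ga_\eps$ is the CC-geodesic of curvature $\la$ with $\ga_\eps(0)=p$ and initial velocity $U(\eps):=\cos\eps\,X_p+\sin\eps\,Y_p$. This is the setting of Lemma~\ref{lem:ccjacobi} with the \emph{constant} base curve $\alpha(\eps)\equiv p$, so $\dot\alpha\equiv 0$. Writing $V_\eps:=\ptl F/\ptl\eps$ and $v_\eps:=\escpr{V_\eps,T}$, Lemma~\ref{lem:ccjacobi}(ii) gives
\[
V_\eps=-\la\,v_\eps\,\dot\ga_\eps+(v'_\eps/2)\,J(\dot\ga_\eps)+v_\eps\,T.
\]
Since $V_\eps(0)=(d/d\eps)\,p=0$ we obtain $v_\eps(0)=v'_\eps(0)=0$. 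Differentiating $\dot\ga_\eps(0)=U(\eps)$ in $\eps$ at the fixed point $p$ gives $V'_\eps(0)=J(U(\eps))$, while the $s$-covariant derivative of the displayed expression has $J(\dot\ga_\eps)$-component equal to $v''_\eps(0)/2$ at $s=0$; hence $v''_\eps(0)=2$, independently of $\eps$.

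With these data and $\tau=4(\la^2+\kappa)=-4\mu^2\leq 0$, Lemma~\ref{lem:ccjacobi}(iii) yields the explicit expression $v_\eps(s)=(\cosh(2\mu s)-1)/(2\mu^2)$ for $\mu>0$ and $v_\eps(s)=s^2$ for $\mu=0$; in particular $v_\eps$ is independent of $\eps$, with $v_\eps>0$ and $v'_\eps>0$ for $s>0$. As in \eqref{eq:normal}, the unit normal realizing the sign in (ii) is $N=(-v_\eps\,J(\dot\ga_\eps)+(v'_\eps/2)\,T)/W$ with $W:=\sqrt{v_\eps^2+(v'_\eps/2)^2}$, so that $\nuh=-J(\dot\ga_\eps)$, $Z=\dot\ga_\eps$, and
\[
\escpr{N,T}=\frac{v'_\eps/2}{W},\qquad \mnh=\frac{v_\eps}{W}.
\]
Statement (ii) is then immediate for $s>0$, and at the pole $\escpr{N,T}\to 1$ since $v_\eps\sim s^2$ and $v'_\eps/2\sim s$ as $s\to 0^+$. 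The engine for the remaining parts is a first integral of the Jacobi equation: integrating $v'''_\eps=-\tau\,v'_\eps$ with the above initial data gives $v''_\eps=4\mu^2 v_\eps+2$, so that $\mu^2 v_\eps^2-(v'_\eps/2)^2+v_\eps$ is constant in $s$ and vanishes at $s=0$, whence
\[
(v'_\eps/2)^2=\mu^2\,v_\eps^2+v_\eps\quad\text{on } [0,\infty).
\]

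For (iii) I would combine \eqref{eq:znt} with $Z=\dot\ga_\eps$ to write $Z(\escpr{N,T})=(d/ds)\escpr{N,T}=\mnh\,(\escpr{B(Z),S}-1)$; differentiating $\escpr{N,T}=(v'_\eps/2)/W$ and substituting the first integral collapses the right-hand side to $\escpr{B(Z),S}=(1+\mu^2)\,\mnh^2$. Statement (iv) is then purely algebraic: the identity
\[
|B(Z)+S|^2+4\,(\kappa-1)\,\mnh^2=4\,(\la^2+\kappa-1)\,\mnh^2+(1+\escpr{B(Z),S})^2
\]
established inside the proof of Lemma~\ref{lem:jacobi} reduces, after inserting (iii) and $\la^2+\kappa=-\mu^2$, to $(1-(1+\mu^2)\,\mnh^2)^2$.

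For (v) I would apply \eqref{eq:vnt} with $v=S$: since $B(S)$ is tangent and $S=\escpr{N,T}\,\nuh-\mnh\,T$ satisfies $J(S)=\escpr{N,T}\,Z$, both terms collapse to $S(\escpr{N,T})=\mnh\,\escpr{B(S),S}$. To evaluate the left side I would use that $\escpr{N,T}$ depends on $s$ alone (because $v_\eps$ is $\eps$-independent), so $V_\eps(\escpr{N,T})=0$; writing $S=-\la\,\mnh\,Z-W^{-1}V_\eps$ (determined by $\escpr{S,Z}=0$ and $\escpr{S,T}=-\mnh$) gives $S(\escpr{N,T})=-\la\,\mnh\,Z(\escpr{N,T})$, and (iii) then yields $\escpr{B(S),S}=\la\,\mnh\,(1-(1+\mu^2)\,\mnh^2)$. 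Finally, for (i) the orthonormal decomposition of $V_\eps$ gives $|V_\eps|^2-\escpr{V_\eps,Z}^2=W^2$, hence $da=W\,d\eps\,ds$ and, using the first integral once more, $\mnh^{-1}\,da=(W^2/v_\eps)\,d\eps\,ds=((1+\mu^2)\,v_\eps+1)\,d\eps\,ds$; this density is bounded near the pole (it tends to $1$) and continuous elsewhere, so $\mnh^{-1}$ is locally integrable. The main obstacle I anticipate is the degeneracy at the pole, where $v_\eps(0)=v'_\eps(0)=0$ renders $N$, $\mnh$ and the parameterization singular, and the asymptotics $v_\eps\sim s^2$, $v'_\eps\sim 2s$ must be controlled to justify both the limit $\escpr{N,T}\to 1$ in (ii) and the boundedness behind (i); a secondary point is the rotational symmetry, i.e.\ the $\eps$-independence of $v_\eps$, which is exactly what makes $S(\escpr{N,T})$ computable in (v).
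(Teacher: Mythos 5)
Your proposal is correct and follows essentially the same route as the paper: the same polar parameterization of $\pla$ by CC-geodesics leaving the pole, the same application of Lemma~\ref{lem:ccjacobi} with constant base curve to get $v(0)=v'(0)=0$, $v''(0)=2$, the same unit normal \eqref{eq:plan}, and the same identities \eqref{eq:znt}, \eqref{eq:vnt} and \eqref{eq:guachi} for (iii)--(v). Your first integral $(v'/2)^2=\mu^2v^2+v$ is just a repackaging of the paper's identity $2vv''+4v^2-(v')^2=4(1+\mu^2)v^2$ (both come from $v'''-4\mu^2v'=0$), and your density $((1+\mu^2)v+1)\,d\eps\,ds$ for $\mnh^{-1}da$ agrees with the paper's explicit case-by-case formulas.
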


\begin{proof}
Take a positive orthonormal basis $\{e_1,e_2\}$ in $\h_p$. Define $F:\rr^2\to M$ by $F(\theta,s):=\ga_\theta(s)$, where $\ga_\theta$ is the CC-geodesic of curvature $\la$ with $\ga_\theta(0)=p$ and $\dot{\ga}_\theta(0)=(\cos\theta)\,e_1+(\sin\theta)\,e_2$. This is a $C^\infty$ map since, for fixed $\la$, the solutions of \eqref{eq:geoeq} depends differentiably on the initial data. We apply Lemma~\ref{lem:ccjacobi} with $\alpha(\theta):=p$ and $U(\theta):=(\cos\theta)\,e_1+(\sin\theta)\,e_2$. Hence the vector field $V_\theta(s):=(\ptl F/\ptl\theta)(\theta,s)$ satisfies $V_\theta(0)=0$, and
\begin{equation}
\label{eq:vtheta}
V_\theta=-(\la\,v_\theta)\,\dot{\ga}_\theta+(v_\theta'/2)\,J(\dot{\ga}_\theta)+v_\theta\,T,
\end{equation}
where the primes $'$ denote derivatives with respect to $s$. Some computations similar to those after \eqref{eq:vtheta2} lead to equalities $v_\theta(0)=v'_\theta(0)=0$ and $v_\theta''(0)=2$. By applying Lemma~\ref{lem:ccjacobi} (iii), we deduce
\begin{equation*}
\label{eq:plav}
v_\theta(s)=v(s):=
\begin{cases}
s^2,\quad\hspace{5.03cm}\text{ if } \tau=0,
\\
\frac{-1}{2\mu^2}\,\big(1-\cosh(2\mu s)\big)=\frac{1}{\mu^2}\sinh^2(\mu s),\quad\text{ if } \tau<0.
\end{cases}
\end{equation*}
Clearly $v(s)>0$ for any $s>0$. Since $(\ptl F/\ptl s)(\theta,s)=\dot{\ga}_\theta(s)$, which is a horizontal vector, then the map $F:[0,2\pi]\times\rr^+\to M$ is a $C^\infty$ immersion with $F([0,2\pi]\times\rr^+)=\pla-\{p\}$. Let us see that
\begin{equation}
\label{eq:plan}
N=\frac{-v\,J(\dot{\ga}_\theta)+(v'/2)\,T}{\sqrt{v^2+(v'/2)^2}},\quad\text{on } \pla-\{p\}.
\end{equation}
It is clear that the right hand side above defines a unit normal along $\pla-\{p\}$. By using \eqref{eq:nuh} we have $\nuh=-J(\dot{\ga}_\theta)$ and $Z=\dot{\ga}_\theta$. Hence any $\ga_\theta(s)$ with $s>0$ is a characteristic curve of $\pla$ for this normal.  By equations \eqref{eq:mc} and \eqref{eq:geoeq} we infer that the mean curvature $H$ of $\pla$ with respect to this normal equals $\la$, which proves the claim. From \eqref{eq:plan} we deduce
\begin{equation}
\label{eq:ss}
\escpr{N,T}=\frac{v'}{\sqrt{4\,v^2+(v')^2}}, \quad
\mnh=\frac{2\,v}{\sqrt{4\,v^2+(v')^2}}.
\end{equation}
On the other hand, note that
\[
da=\sqrt{|V_\theta|^2-\escpr{V_\theta,\dot{\ga}_\theta}^2} \ d\theta\,ds=\frac{\sqrt{4\,v^2+(v')^2}}{2} \ d\theta\,ds,
\]
and so
\[
\mnh^{-1}\,da=\frac{4\,v^2+(v')^2}{4\,v} \ d\theta\,ds=
\begin{cases}
(s^2+1)\,d\theta\,ds,\quad\hspace{2.9cm}\text{ if } \tau=0,
\\
\big(\frac{1}{\mu^2}\sinh^2(\mu s)+\cosh^2(\mu s)\big)\,d\theta\,ds,\quad\text{ if } \tau<0.
\end{cases}
\]
This proves statement (i). The inequality $\escpr{N,T}>0$ on $\pla-\{p\}$ comes from the first identity in \eqref{eq:ss} since $v'>0$ on $\rr^+$. We also have $N_p=T_p$ since $p$ is a singular point. This proves (ii).

By taking into account \eqref{eq:znt}, \eqref{eq:ss}, and that any $\ga_\theta$ is a characteristic curve of $\pla$, we obtain
\begin{equation}
\label{eq:mm}
\escpr{B(Z),S}=\frac{Z(\escpr{N,T})}{\mnh}+1=\frac{2\,v\,v''+4\,v^2-(v')^2}{4\,v^2+(v')^2}.
\end{equation}
On the other hand, we have equality $2vv''+4v^2-(v')^2=4\,(1+\mu^2)\,v^2$, which follows by differentiating and using the identity $v'''-4\mu^2v'=0$ in Lemma~\ref{lem:ccjacobi} (iii). Thus, the second equality in \eqref{eq:ss} implies (iii). Observe also that $\escpr{B(Z),Z}=2\la\,\mnh$ by \eqref{eq:guachi}. After some computations, we get
\[
|B(Z)+S|^2+4\,(\kappa-1)\,\mnh^2=-4\,(1+\mu^2)\,\mnh^2+\big(1+\escpr{B(Z),S}
\big)^2=\big(1-(1+\mu^2)\,\mnh^2\big)^2,
\]
where in the second equality we have employed statement (iii). This proves (iv).

Finally, we prove (v). From \eqref{eq:vnt} we obtain $S(\escpr{N,T})=\mnh\,\escpr{B(S),S}$ since $J(S)=\escpr{N,T}\,Z$ and $T^\top=-\mnh\,S$. By \eqref{eq:vtheta}, \eqref{eq:ss} and equality $\nu_h=-J(\dot{\ga}_\theta)$, we infer that
\begin{equation*}
\label{eq:esepolar}
S=\frac{-2}{\sqrt{4\,v^2+(v')^2}}\,V_\theta-\la\,\mnh\,Z.
\end{equation*}
Having in mind \eqref{eq:mm} and that $\escpr{N,T}$ does not depend on $\theta$, we deduce
\[
\escpr{B(S),S}=\frac{S(\escpr{N,T})}{\mnh}=-\la\,Z(\escpr{N,T})=\la\,\mnh\,\big(1-\escpr{B(Z),S}\big),
\]
and the claim follows since $\escpr{B(Z),S}=(1+\mu^2)\,\mnh^2$.
\end{proof}

In the next step we simplify the second variation formula given in \eqref{eq:2ndvar} for admissible variations of $\pla$. Indeed, the following result shows that $(A+2\la V)''(0)=Q(w,w)$ even for variations moving the pole of $\pla$. We must remark that, since the function $\mnh^{-1}$ is locally integrable on $\pla$, the family of admissible variations of $\pla$ moving the pole is very large, see \cite[App.~B]{hr2}.

\begin{proposition}
\label{prop:2ndpla}
Let $M$ be a $3$-dimensional space form of Webster scalar curvature $\kappa$. Consider a plane $\pla$ immersed in $M$ for some $p\in M$ and $\la\geq 0$ with $\la^2+\kappa\leq 0$. Let $\varphi:I\times\pla\to M$ be an admissible variation of class $C^3$ off of the pole. Denote $w:=\escpr{U,N}$, where $U$ is the velocity vector field and $N$ is the unit normal with associated mean curvature $H=\la$. Then, the functional $A+2\la V$ is twice differentiable at $s=0$, and we have
\[
(A+2\la V)''(0)=\mathcal{Q}(w,w),
\]
where $\mathcal{Q}$ is the index form defined in \eqref{eq:indexform}.
\end{proposition}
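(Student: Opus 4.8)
By the second variation formula \eqref{eq:2ndvar}, which applies since $\varphi$ is admissible and of class $C^3$ off the pole, the functional $A+2\la V$ is twice differentiable at $s=0$ and
\[
(A+2\la V)''(0)=\mathcal{Q}(w,w)+\int_{\pla}\divv_{\pla}G\,da,
\]
where $G$ is the tangent $C^1$ vector field along $\pla-\{p\}$ given in \cite[Eq.~(7.1)]{hr2}. The local integrability of $\mnh^{-1}$ established in Lemma~\ref{lem:comppla} (i) guarantees that $\mathcal{Q}(w,w)$ and the divergence integral are finite. Hence the proposition reduces to showing that $\int_{\pla}\divv_{\pla}G\,da=0$, and the plan is to reproduce the argument used for the two poles of the spheres $\sla(p)$ in \cite[Sect.~5]{hr2}. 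First I would use the polar-type parametrization $F(\theta,s)=\ga_\theta(s)$ of Lemma~\ref{lem:comppla}, together with the asymptotics $v(s)\sim s^2$ and $v'(s)\sim 2s$ as $s\to 0^+$, which give $\mnh\sim s$ and $\escpr{N,T}\to 1$ near $p$. In these coordinates the area element is $da=\tfrac12\sqrt{4v^2+(v')^2}\,d\theta\,ds$, so the geodesic circle $\{s=\eps\}$ has length of order $\eps$.

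Next I would integrate on the region $\pla-D_\eps$, where $D_\eps:=\{\ga_\theta(s)\,;\,s\in[0,\eps]\}$ is the geodesic disk of radius $\eps$ about the pole. Since every variation is trivial outside a fixed compact set $C$, the field $G$ has compact support in $\pla$, so the Riemannian divergence theorem on $\pla-D_\eps$ produces no contribution from a large circle and yields
\[
\int_{\pla-D_\eps}\divv_{\pla}G\,da=\int_{\{s=\eps\}}\escpr{G,\xi}\,dl,
\]
where $\xi$ is the outward unit conormal along $\{s=\eps\}$ and $dl$ is its length element. Letting $\eps\to 0^+$, the left-hand side tends to $\int_{\pla}\divv_{\pla}G\,da$ by the integrability noted above, so it remains to prove that the boundary integral on the right goes to zero.

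This last estimate is the heart of the matter. The expression for $G$ involves the factor $\mnh^{-1}$, which blows up like $s^{-1}$ as $s\to 0^+$, multiplied by the velocity field and its derivatives. The difficulty is to show that, along the shrinking circles $\{s=\eps\}$, the conormal component $\escpr{G,\xi}$ decays quickly enough that its integral against $dl$—whose total length is of order $\eps$—still tends to zero. This requires combining the precise behaviour of $\pla$ near $p$ recorded in Lemma~\ref{lem:comppla} with the admissibility and $C^3$ regularity of $\varphi$ off the pole, exactly as was done for the poles of $\sla(p)$ in \cite[Sect.~5]{hr2}. Once this is established we obtain $\int_{\pla}\divv_{\pla}G\,da=0$ and therefore $(A+2\la V)''(0)=\mathcal{Q}(w,w)$, as claimed.
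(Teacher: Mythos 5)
Your proposal is correct and follows essentially the same route as the paper: apply the second variation formula \eqref{eq:2ndvar} and then show that $\int_{\pla}\divv_{\pla}G\,da=0$ by combining the integrability and asymptotics of Lemma~\ref{lem:comppla} near the pole with the excision/approximation argument (a small geodesic disk around $p$ whose boundary contribution vanishes as $\eps\to 0^+$), which is exactly the content of the paper's Lemma~\ref{lem:divpla} and its appeal to the analogous computation for the spheres $\sla(p)$ in \cite[App.~A]{hr2}. The only difference is one of presentation: you unfold the cut-off argument explicitly, while the paper packages it into the generalized divergence theorem.
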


Note that, by statements (i) and (iv) in Lemma~\ref{lem:comppla}, all the terms in $\mathcal{Q}(u,v)$ are locally integrable whenever $u,v\in C^1(\pla)$ and at least one of them has compact support on $\pla$.

\begin{proof}[Proof of Proposition~\ref{prop:2ndpla}]
By equation \eqref{eq:2ndvar} it suffices to check that $\divv_{\pla}G$ is integrable with respect to $da$ and has vanishing integral. This can be done by reproducing the arguments for the spherical surfaces $\sla(p)$, see the proof of Thm.~5.2 in \cite[App.~A]{hr2}. This requires Lemma~\ref{lem:comppla} and the generalized divergence theorem for $\pla$ in Lemma~\ref{lem:divpla} below.
\end{proof}

\begin{lemma}
\label{lem:divpla}
Let $U$ be a bounded and tangent $C^1$ vector field on $\pla-\{p\}$, vanishing off of a compact set of $\pla$, and such that $\divv_{\pla} U$ is integrable with respect to $da$. Then, we have
\[
\int_{\pla}\divv_{\pla}U\,da=0.
\]
\end{lemma}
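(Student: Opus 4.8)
The plan is to remove a shrinking geodesic disk around the pole $p$, apply the Riemannian divergence theorem on the resulting annulus, and then show that the only surviving boundary flux collapses because $U$ is bounded while the inner boundary shrinks to a point. I would keep the parametrization $F(\theta,s)=\ga_\theta(s)$, $(\theta,s)\in[0,2\pi]\times\rr^+$, of $\pla-\{p\}$, the radial function $v(s)$, and the area element $da=\frac{\sqrt{4v^2+(v')^2}}{2}\,d\theta\,ds$ from the proof of Lemma~\ref{lem:comppla}. For $r>0$ put $D_r:=\{p\}\cup F([0,2\pi]\times(0,r])$, so that the circle $\{s=r\}$ is the inner boundary $\ptl D_r$.

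Since $\divv_\pla U$ is integrable and the sets $\pla-D_r$ increase to $\pla-\{p\}$ as $r\downarrow 0$, the dominated convergence theorem yields
\[
\int_\pla\divv_\pla U\,da=\lim_{r\to 0}\int_{\pla-D_r}\divv_\pla U\,da.
\]
The hypothesis that $U$ vanishes off a compact set of $\pla$ provides $R>0$ with $U\equiv 0$ for $s\geq R$; hence on the compact annulus $\{r\leq s\leq R\}$ the divergence theorem applies, the outer boundary $\{s=R\}$ contributes nothing, and we are left with
\[
\int_{\pla-D_r}\divv_\pla U\,da=\int_{\ptl D_r}\escpr{U,\nu_r}\,dl,
\]
where $\nu_r$ is the unit conormal along $\{s=r\}$ pointing towards $p$ and $dl$ is its length element.

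It then remains to let $r\to 0$. I would bound the flux by $\sup_{\pla-\{p\}}|U|\cdot L(\ptl D_r)$ via Cauchy--Schwarz, where $L(\ptl D_r)$ is the length of the circle $\{s=r\}$. Along this circle the tangent vector is $\ptl_\theta F=V_\theta$, and from \eqref{eq:vtheta} together with the orthonormality of $\{\dot\ga_\theta,J(\dot\ga_\theta),T\}$ one reads off $|V_\theta(r)|=\sqrt{(\la^2+1)\,v(r)^2+(v'(r)/2)^2}$, which is independent of $\theta$; thus $L(\ptl D_r)=2\pi\,|V_\theta(r)|$. Since $v(s)=s^2$ when $\tau=0$ and $v(s)=\mu^{-2}\sinh^2(\mu s)$ when $\tau<0$, both $v(r)$ and $v'(r)$ tend to $0$ as $r\to 0$, so $L(\ptl D_r)\to 0$. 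As $U$ is bounded, the flux across $\ptl D_r$ tends to $0$, and the first displayed limit gives $\int_\pla\divv_\pla U\,da=0$.

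The one genuinely delicate point is this passage to the limit: the argument works only because the removed circle collapses \emph{in length}, not merely in diameter. This is exactly where the asymptotics $v(s)\sim s^2$ near the pole enter, forcing $v(r)$ and $v'(r)$ to vanish simultaneously; combined with the mere boundedness of $U$ (no control on its derivatives at $p$ is needed), this renders the conormal flux negligible. The remaining ingredients, namely the divergence theorem on the compact annulus and the dominated convergence step, are routine.
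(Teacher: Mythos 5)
Your proof is correct and is essentially the argument the paper has in mind: the paper's one-line proof defers to the approximation argument of \cite[Lem.~7.4]{hr2}, which is the cutoff/excision version of exactly what you carry out — remove a small disk about the pole, apply the divergence theorem, and kill the inner flux using the boundedness of $U$ together with the fact that the circles $\{s=r\}$ have length $2\pi\sqrt{(\la^2+1)\,v(r)^2+(v'(r)/2)^2}\to 0$. Your explicit identification of $|V_\theta(r)|$ from \eqref{eq:vtheta} and the asymptotics $v(s)\sim s^2$ supply precisely the details the paper omits.
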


\begin{proof}
The result follows from an approximation argument similar to the one in  \cite[Lem.~7.4]{hr2}.
\end{proof}

Now, we begin to discuss the stability properties of $\pla$. By Proposition~\ref{prop:2ndpla} and equation \eqref{eq:indexform}, it is natural to check first if $q:=|B(Z)+S|^2+4\,(\kappa-1)\,\mnh^2$ is a non-positive function. Unfortunately Lemma~\ref{lem:comppla} (iv) gives $q\geq 0$ and $q\neq 0$ on $\pla$, so that we cannot obtain strong stability from this method. However, we can use Lemma~\ref{lem:comppla} (ii) and Corollary~\ref{cor:criteria} (ii) to deduce that $\pla-\{p\}$ is strongly stable. The technical difficulty in proving that $\pla$ is strongly stable arises when we deform $\pla$ by \emph{variations moving the pole}. If we analyze the proof of Theorem~\ref{th:criterion} with $\psi=\escpr{N,T}$ and $w\in C_0^1(\pla)$, then we discover that this difficulty is solved provided equality \eqref{eq:ibp} still holds for $u=(w^2/\psi^2)\,\psi$ and $v=\psi$ (these functions has no compact support on $\pla-\{p\}$ if $w(p)\neq 0$). This is established in the next result.

\begin{lemma}
\label{lem:newibp}
Let $M$ be a $3$-dimensional space form of Webster scalar curvature $\kappa$. Consider a plane $\pla$ immersed in $M$ for some $p\in M$ and $\la\geq 0$ with $\la^2+\kappa\leq 0$. If $\mathcal{Q}$ is the index form of $\pla$ and we denote $\psi:=\escpr{N,T}$, then $\mathcal{Q}(u,\psi)=0$ for any $u\in C^1_0(\pla)$.
\end{lemma}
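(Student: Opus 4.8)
The plan is to derive the identity from the integration by parts formula \eqref{eq:ibp} and the fact that $\psi=\escpr{N,T}$ is a Jacobi function. Indeed, by Lemma~\ref{lem:jacobi} we have $\ele(\psi)=0$ on $\pla-\{p\}$, so \eqref{eq:ibp} would give $\mathcal{Q}(u,\psi)=-\int_\pla u\,\ele(\psi)\,da=0$ at once. The only obstruction is that \eqref{eq:ibp} requires the test function to have compact support in the regular set $\pla-\{p\}$, whereas a general $u\in C^1_0(\pla)$ need not vanish near the pole. I would therefore prove the statement by cutting off $u$ near $p$, applying \eqref{eq:ibp} to the truncation, and then letting the cutoff disappear.

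First I would use the polar coordinates $(\theta,s)$ from the proof of Lemma~\ref{lem:comppla}, in which the characteristic CC-geodesics are the curves $\ga_\theta$, so that $Z=\ptl/\ptl s$, the pole corresponds to $s=0$, and $da=\tfrac12\sqrt{4v^2+(v')^2}\,d\theta\,ds$. Fix a smooth function $\chi_\eps:\rr^+\to[0,1]$ with $\chi_\eps(s)=0$ for $s\leq\eps$, $\chi_\eps(s)=1$ for $s\geq 2\eps$, and $|\chi_\eps'|\leq C/\eps$; viewed as a function of $s$ alone it defines a function on $\pla$ vanishing near $p$. Then $u_\eps:=\chi_\eps\,u$ belongs to $C^1_0(\pla-\{p\})$, and since $\ele(\psi)=0$ there, formula \eqref{eq:ibp} yields $\mathcal{Q}(u_\eps,\psi)=0$ for every $\eps>0$.

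Next I would expand $\mathcal{Q}(u_\eps,\psi)$ from \eqref{eq:indexform} using $Z(\chi_\eps u)=\chi_\eps\,Z(u)+u\,Z(\chi_\eps)$ and pass to the limit $\eps\to 0$. The key input is that, combining \eqref{eq:znt} with Lemma~\ref{lem:comppla}(iii), one has $Z(\psi)=\mnh\big(\escpr{B(Z),S}-1\big)=\mnh\big((1+\mu^2)\,\mnh^2-1\big)$, whence $\mnh^{-1}Z(\psi)=(1+\mu^2)\,\mnh^2-1$ is bounded on $\pla$. Therefore $\mnh^{-1}Z(u)Z(\psi)$ is bounded with compact support, and by Lemma~\ref{lem:comppla}(i),(iv) the function $\mnh^{-1}q\,u\,\psi$ (with $q:=|B(Z)+S|^2+4(\kappa-1)\mnh^2$) is dominated by $C\,\mnh^{-1}$ on $\text{supp}(u)$, which is integrable. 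Dominated convergence then gives $\int_\pla\mnh^{-1}\chi_\eps Z(u)Z(\psi)\,da\to\int_\pla\mnh^{-1}Z(u)Z(\psi)\,da$ and $\int_\pla\mnh^{-1}q\,\chi_\eps u\,\psi\,da\to\int_\pla\mnh^{-1}q\,u\,\psi\,da$, whose difference is precisely $\mathcal{Q}(u,\psi)$.

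The main obstacle is to show that the remaining boundary term $R_\eps:=\int_\pla\mnh^{-1}\,u\,Z(\chi_\eps)\,Z(\psi)\,da$ tends to $0$. Since $Z(\chi_\eps)=\chi_\eps'(s)$ is supported in the annulus $\{\eps\leq s\leq 2\eps\}$ with $|\chi_\eps'|\leq C/\eps$, and $\mnh^{-1}Z(\psi)$ is bounded, I get $|R_\eps|\leq (C/\eps)\int_{\{\eps\leq s\leq 2\eps\}}da$. As $v(s)\sim s^2$ and $v'(s)\sim 2s$ when $s\to 0$, the density $\tfrac12\sqrt{4v^2+(v')^2}\sim s$, so the annulus has area $O(\eps^2)$ and hence $|R_\eps|\leq C\eps\to 0$. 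Passing to the limit in $0=\mathcal{Q}(u_\eps,\psi)$ then yields $\mathcal{Q}(u,\psi)=0$, completing the proof.
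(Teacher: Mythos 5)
Your proof is correct, and it reaches the conclusion by a route that is technically different from, though morally equivalent to, the one in the paper. The paper does not cut off the test function at all: it rewrites $0=\ele(\psi)$ as the pointwise identity $\divv_{\pla}\bigl(\mnh^{-1}Z(\psi)\,Z\bigr)+\mnh^{-1}q\,\psi=0$, introduces the tangent vector field $U:=\mnh^{-1}Z(\psi)\,u\,Z=\bigl((1+\mu^2)\,\mnh^2-1\bigr)\,u\,Z$, checks via Lemma~\ref{lem:comppla} that $U$ is bounded, vanishes off a compact set, and has integrable divergence $\divv_{\pla}U=\mnh^{-1}Z(\psi)\,Z(u)-\mnh^{-1}q\,\psi\,u$, and then invokes the generalized divergence theorem of Lemma~\ref{lem:divpla} to conclude $0=\int_{\pla}\divv_{\pla}U\,da=\mathcal{Q}(u,\psi)$. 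The approximation near the pole is thus delegated to Lemma~\ref{lem:divpla}, whose proof the paper only sketches by reference, whereas you perform it by hand: you truncate $u$, apply \eqref{eq:ibp} together with Lemma~\ref{lem:jacobi} on the regular set, and control the boundary term $R_\eps$ explicitly. The quantitative inputs are identical in both arguments --- the boundedness of $\mnh^{-1}Z(\psi)=(1+\mu^2)\,\mnh^2-1$ coming from \eqref{eq:znt} and Lemma~\ref{lem:comppla}\,(iii), the boundedness of $q$ from part (iv), the local integrability of $\mnh^{-1}$ from part (i), and the asymptotics $v\sim s^2$, $v'\sim 2s$ giving area density $O(s)$ near the pole --- so your version has the merit of being self-contained where the paper leans on a separate lemma, at the cost of tying the computation to the polar parametrization. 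Your estimate $|R_\eps|\leq (C/\eps)\cdot O(\eps^2)\to 0$ is the crux of the matter and it is right.
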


\begin{proof}
We follow the proof of \cite[Prop.~3.14]{hrr}. Let $q:=|B(Z)+S|^2+4\,(\kappa-1)\,\mnh^2$. We have
\begin{equation}
\label{eq:m1}
0=\ele(\psi)=\divv_{\pla}\big(\mnh^{-1}Z(\psi)\,Z\big)+\mnh^{-1}q\,\psi.
\end{equation}
The first equality comes from Lemma~\ref{lem:jacobi}. The second one, which is valid for any $\psi\in C^2(\pla)$, is a straightforward computation by using the definition of $\ele(\psi)$ in \eqref{eq:lu} together with \cite[Lem.~5.5]{rosales} and equation \eqref{eq:zmnh}. We define a tangent $C^1$ vector field on $\pla-\{p\}$ by
\[
U:=\mnh^{-1}Z(\psi)\,u\,Z=\big(\escpr{B(Z),S}-1\big)\,u\,Z,
\]
where in the second equality we have taken into account \eqref{eq:znt}. We know by Lemma~\ref{lem:comppla} (iii) that $\escpr{B(Z),S}=(1+\mu^2)\,\mnh^2$ on $\pla-\{p\}$. Since $u\in C^1_0(\pla)$ we deduce that $U$ is bounded and vanishes off of a compact set of $\pla$. From \eqref{eq:m1}, it follows that
\begin{align*}
\divv_{\pla}U&=u\,\divv_{\pla}\big(\mnh^{-1}Z(\psi)\,Z\big)+\mnh^{-1}\,Z(\psi)\,Z(u)
\\
&=\mnh^{-1}Z(\psi)\,Z(u)-\mnh^{-1}\,q\,\psi\,u
\\
&=\big((1+\mu^2)\,\mnh^2-1\big)\,Z(u)-\mnh^{-1}q\,\psi\,u,
\end{align*}
which is an integrable function with respect to $da$ by statements (i) and (iv) in Lemma~\ref{lem:comppla}. Finally, we apply Lemma~\ref{lem:divpla} to get
\[
0=\int_{\pla}\divv_{\pla}U\,da=\int_{\pla}\mnh^{-1}\,\{Z(\psi)\,Z(u)-q\,\psi\,u\}\,da=\mathcal{Q}(u,\psi),
\]
and the lemma is proved.
\end{proof}

Now, we are ready to establish the main result of this section.

\begin{theorem}
\label{th:stabilitypla}
Let $M$ be a $3$-dimensional space form of Webster scalar curvature $\kappa$. Consider a plane $\pla$ immersed in $M$ for some $p\in M$ and $\la\geq 0$ with $\la^2+\kappa\leq 0$. Then, the index form satisfies $\mathcal{Q}(w,w)>0$, for any $w\in C_0^1(\pla)$ with $w\neq 0$. Hence, $\pla$ is strictly stable under admissible variations.
\end{theorem}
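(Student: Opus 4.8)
The plan is to reproduce the argument of Theorem~\ref{th:criterion} with the nowhere vanishing Jacobi function $\psi:=\escpr{N,T}$, the only new feature being that admissible variations are now allowed to move the pole, so that the test functions live in $C_0^1(\pla)$ rather than in $C_0^1(\pla-\{p\})$. By Lemma~\ref{lem:comppla}~(ii) we have $\psi>0$ on all of $\pla$, and by Lemma~\ref{lem:jacobi} we have $\ele(\psi)=0$ on $\pla-\{p\}$. First I would fix $w\in C_0^1(\pla)$ and set $f:=w/\psi$; since $\psi$ is positive and $C^1$, the quotient $f$ again belongs to $C_0^1(\pla)$, and so does $f^2\psi=w^2/\psi$.

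Next I would expand $\mathcal{Q}(w,w)=\mathcal{Q}(f\psi,f\psi)$ exactly as in \eqref{eq:exodus}, writing $Z(f\psi)^2=f^2\,Z(\psi)^2+\psi^2\,Z(f)^2+2f\psi\,Z(f)\,Z(\psi)$. The point at which the proof of Theorem~\ref{th:criterion} used the integration by parts formula \eqref{eq:ibp} with $u=f^2\psi$ is precisely where a test function not supported off the pole appears, so here I would invoke Lemma~\ref{lem:newibp} instead, which gives $\mathcal{Q}(f^2\psi,\psi)=0$ (the Jacobi property $\ele(\psi)=0$ being already built in). Subtracting this identity from the expansion of $\mathcal{Q}(w,w)$ collapses all the cross terms and leaves the analogue of \eqref{eq:sorpasso},
\[
\mathcal{Q}(w,w)=\int_{\pla}\mnh^{-1}\,\psi^2\,Z(f)^2\,da\geq 0.
\]

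For the strict inequality a separate argument is needed, since $\psi\,\ele(\psi)=0$ rather than $<0$, so positivity does not follow from Theorem~\ref{th:criterion} directly. If $\mathcal{Q}(w,w)=0$ then $Z(f)=0$ on $\pla-\{p\}$, i.e. $f$ is constant along every characteristic curve. As established in the proof of Lemma~\ref{lem:comppla} (where $Z=\dot{\ga}_\theta$), the characteristic curves of $\pla$ are the generating CC-geodesics $\ga_\theta$ issuing from $p$, which foliate $\pla-\{p\}$; letting $s\to 0^+$ and using continuity of $f$ at the pole forces the common value on each $\ga_\theta$ to equal $f(p)$, whence $f\equiv f(p)$ on $\pla$. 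Then $w=f(p)\,\psi$, and since $\psi>0$ everywhere while $\pla$ is non-compact, compact support of $w$ forces $f(p)=0$, i.e. $w=0$. This proves $\mathcal{Q}(w,w)>0$ for $w\neq 0$. Strict stability under admissible variations then follows from Proposition~\ref{prop:2ndpla}: for any admissible variation with velocity field $U$ and $w=\escpr{U,N}$ we have $(A+2\la V)''(0)=\mathcal{Q}(w,w)\geq 0$, with equality precisely when $w=0$, that is, when $U$ is everywhere tangent to $\pla$.

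The genuine difficulty is entirely concentrated in the integration by parts $\mathcal{Q}(f^2\psi,\psi)=0$ across the pole, since $f^2\psi$ need not vanish near $p$; this is exactly the content of Lemma~\ref{lem:newibp}, which in turn rests on the local integrability of $\mnh^{-1}$ from Lemma~\ref{lem:comppla}~(i) together with the generalized divergence theorem of Lemma~\ref{lem:divpla}. Once that ingredient is granted, the remaining steps are the routine algebraic expansion above and the foliation argument for strictness.
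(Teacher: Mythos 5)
Your proposal is correct and follows essentially the same route as the paper: expand $\mathcal{Q}(w,w)=\mathcal{Q}(f\psi,f\psi)$ with $f=w/\psi$ as in \eqref{eq:exoduscopia}, replace the integration by parts of Theorem~\ref{th:criterion} by Lemma~\ref{lem:newibp} applied to $u=f^2\psi$ to arrive at $\mathcal{Q}(w,w)=\int_{\pla}\mnh^{-1}\psi^2\,Z(f)^2\,da$, and then use that the characteristic curves all emanate from the pole together with compact support of $f$ to conclude $w=0$ in the equality case. Your slightly more detailed treatment of the equality case (continuity at the pole forcing $f\equiv f(p)$, then $f(p)=0$ from compact support) is a faithful expansion of the paper's one-line argument, and you correctly identify Lemma~\ref{lem:newibp} as the crux.
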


\begin{proof}
Let $N$ be the unit normal on $\pla$ with mean curvature $H=\la$. We denote $q:=|B(Z)+S|^2+4\,(\kappa-1)\,|N_{h}|^2$ and $\psi:=\escpr{N,T}$. We know that $\psi>0$ on $\pla$ by Lemma~\ref{lem:comppla} (ii). Take $w\in C_0^1(\pla)$ and define $f:=w/\psi$, which is in $C^1_0(\pla)$. As in the proof of Theorem~\ref{th:criterion} we get
\begin{equation}
\label{eq:exoduscopia}
\mathcal{Q}(w,w)=\int_{\pla}\mnh^{-1}\left\{f^2\,Z(\psi)^2+\psi^2\,Z(f)^2+\psi\,Z(f^2)\,Z(\psi)-q\,f^2\psi^2\right\}da.
\end{equation}
Now, we apply Lemma~\ref{lem:newibp} with $u:=f^2\psi$ to obtain
\[
0=\mathcal{Q}(u,\psi)=\int_{\pla}\mnh^{-1}\left\{Z(f^2\psi)\,Z(\psi)-q\,f^2\psi^2\right\}da=\mathcal{Q}(w,w)-\int_{\pla}\mnh^{-1}\psi^2\,Z(f)^2\,da,
\]
where in the second equality we have used \eqref{eq:exoduscopia}. We conclude that
\[
\mathcal{Q}(w,w)=\int_{\pla}\mnh^{-1}\psi^2\,Z(f)^2\,da\geq 0.
\]
Suppose that $Q(w,w)=0$. The fact that $\psi>0$ on $\pla$ implies $Z(f)=0$, i.e., $f$ is constant along the characteristic curves of $\pla$. Since these curves meet the pole and $f\in C_0(\pla)$, then $f=0$ and so $w=0$. Finally, if $\varphi$ is an admissible variation which is $C^3$ off of the pole, then Proposition~\ref{prop:2ndpla} gives us $(A+2\la V)''(0)=Q(w,w)$, where $w:=\escpr{U,N}$ and $U$ is the velocity vector field. From here we conclude that $\pla$ is strictly stable and the proof finishes.
\end{proof}

\begin{remark}
Montefalcone~\cite[Cor.~6.9]{montefalcone2} employed a criterion similar to Corollary~\ref{cor:criteria} (ii) to discuss the strict stability of the regular set of minimal hyperplanes in Carnot groups of step 2.
\end{remark}

\subsection{Surfaces with singular curves}
\label{subsec:cmula}
\noindent

Let $M$ be a $3$-dimensional space form of Webster scalar curvature $\kappa$. Given a CC-geodesic $\Ga:\rr\to M$ of curvature $\mu$ and a number $\la\geq 0$, we define the map $F:\rr^2\to M$ by $F(\eps,s):=\ga_\eps(s)$, where $\ga_\eps$ is the CC-geodesic of curvature $\la$ in $M$ with $\ga_\eps(0)=\Ga(\eps)$ and $\dot{\ga}_\eps(0)=J(\dot{\Ga}(\eps))$. It follows from Lemma~\ref{lem:ccjacobi} (i) that $V_\eps(s):=(\ptl F/\ptl\eps)(\eps,s)$ is a $C^\infty$ vector field with $[\dot{\ga}_\eps,V_\eps]=0$ along $\ga_\eps$. Let $v_\eps:=\escpr{V_\eps,T}$. From Lemma~\ref{lem:ccjacobi} (ii) we deduce that
\begin{equation}
\label{eq:cmulavtheta2}
V_\eps=-(\la\,v_\eps)\,\dot{\ga}_\eps+(v'_\eps/2)\,J(\dot{\ga}_\eps)+v_\eps\,T.
\end{equation}
Clearly $v_\eps(0)=0$ and $v'_\eps(0)=-2$ since $V_\eps(0)=\dot{\Ga}(\eps)$. Moreover, we have
\[
V_\eps'(0)=(2\la)\,\dot{\ga}_\eps(0)+(v_\eps''(0)/2)\,J(\dot{\ga}_\eps(0))
-J(\dot{\ga}_\eps)'(0)-2\,T_{\ga_\eps(0)}.
\]
Note that $J(\dot{\ga}_\eps)'=(2\la)\,\dot{\ga}_\eps-T$ by \eqref{eq:dujv} and \eqref{eq:geoeq}. Therefore
\[
V_\eps'(0)=-(v_\eps''(0)/2)\,\dot{\Ga}(\eps)-T_{\Ga(\eps)}.
\]
On the other hand $V_\eps'=D_{\dot{\ga}_\eps}V_\eps=D_{V_\eps}\dot{\ga}_\eps$ since $[\dot{\ga}_\eps,V_\eps]=0$ along $\ga_\eps$. As a consequence
\[
V_\eps'(0)=D_{V_\eps}J(\dot{\Ga})=J(\dot{\Ga}'(\eps))-T_{\Ga(\eps)}=2\mu\,\dot{\Ga}(\eps)-T_{\Ga(\eps)},
\]
where we have used \eqref{eq:dujv} and that $\dot{\Ga}'=-2\mu\,J(\dot{\Ga})$. The two previous equalities yield $v''_\eps(0)=-4\mu$.

Let $\tau:=4\,(\la^2+\kappa)$. Then Lemma~\ref{lem:ccjacobi} (iii) gives us
\begin{equation}
\label{eq:cmulav}
v_\eps(s)=v(s):=
\begin{cases}
-2\mu s^2-2s,\quad\hspace{5.42cm}\text{ if } \tau=0,
\\
\frac{2}{\sqrt{\tau}}\left\{\frac{-2\mu}{\sqrt{\tau}}\big(1-\cos(\sqrt{\tau} s)\big)-\sin(\sqrt{\tau} s)\right\},\quad\hspace{1.4cm}\text{ if } \tau>0,
\\
\frac{2}{\sqrt{-\tau}}\left\{\frac{-2\mu}{\sqrt{-\tau}}\big(\!\cosh(\sqrt{-\tau} s)-1\big)-\sinh(\sqrt{-\tau} s)\right\},\quad\hspace{0.165cm}\text{ if } \tau<0.
\end{cases}
\end{equation}
The fact that $v'(0)=-2$ implies that $v(s)<0$ for any $s>0$ small enough. If there is a first number $s_0>0$ such that $v(s_0)=0$, then we set $I_0:=[0,s_0]$. In case $v<0$ on $\rr^+$, then we denote $I_0:=[0,+\infty)$. In these conditions, we define
\begin{equation}
\label{eq:sglaGa}
\Sg_\la(\Ga):=F(\rr\times I_0)=\{\ga_\eps(s)\,; \eps\in\rr,\,s\in I_0\}.
\end{equation}

In the next result we establish some properties of $\Sg_\la(\Ga)$. The lemma is known in $\mm(\kappa)$ when $\kappa\geq 0$, see \cite[Prop.~6.3]{rr2} and \cite[Prop.~5.5]{hr1}. Here we follow a general approach which does not use an explicit expression for the CC-geodesics in $M$.

\begin{lemma}
\label{lem:singcurves}
In the previous conditions, we have:
\begin{itemize}
\item[(i)] $\Sg_\la(\Ga)$ is a $C^\infty$ surface immersed in $M$.
\item[(ii)] If $s_0<+\infty$, then the singular set $\Sg_\la(\Ga)_0$ can be parameterized as two CC-geodesics of curvature $\mu$ given by $\Ga(\eps)$ and $\Ga_0(\eps):=F(\eps,s_0)$. Otherwise $\Sg_\la(\Ga)_0=\Ga(\rr)$.
\item[(iii)] If $s_0<+\infty$, then the CC-geodesics $\ga_\eps(s)$ with $s\in I_0$ meet orthogonally $\Ga$ and $\Ga_0$.
\item[(iv)] There is a Riemannian unit normal $N$ such that any CC-geodesic $\ga_\eps(s)$ with $s\in(0,s_0)$ is a characteristic curve of $\Sg_{\la}(\Ga)$. Moreover, the associated mean curvature satisfies $H=\la$.
\end{itemize}
\end{lemma}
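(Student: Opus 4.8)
The plan is to reduce everything to the scalar function $v:=v_\eps$, which is \emph{independent of} $\eps$ because its initial data $v_\eps(0)=0$, $v_\eps'(0)=-2$, $v_\eps''(0)=-4\mu$ do not involve $\eps$. I will extract all the geometry from the third-order equation $v'''+\tau v'=0$ of Lemma~\ref{lem:ccjacobi} (iii), and the whole argument will be driven by two of its first integrals: the quantities $P:=2vv''-(v')^2+\tau v^2$ and $R:=v''+\tau v$ have vanishing $s$-derivative along $\ga_\eps$, so $P\equiv P(0)=-4$ and $R\equiv R(0)=-4\mu$.

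For (i), the tangent plane of $F$ at $(\eps,s)$ is spanned by $\dot{\ga}_\eps=\ptl F/\ptl s$ and $V_\eps=\ptl F/\ptl\eps$, and by \eqref{eq:cmulavtheta2} the component of $V_\eps$ off $\dot{\ga}_\eps$ is $(v'/2)\,J(\dot{\ga}_\eps)+v\,T$. Hence $F$ is an immersion exactly where $(v,v')\neq(0,0)$. The identity $P\equiv-4$ forces $(v')^2=4$ at every zero of $v$, so $v$ and $v'$ never vanish simultaneously; thus $F$ is a $C^\infty$ immersion and $\Sg_\la(\Ga)$ is a $C^\infty$ immersed surface. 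For (ii), since $\dot{\ga}_\eps$ is horizontal, the tangent plane is horizontal precisely when $V_\eps$ is horizontal, i.e. when $v=0$. On $I_0$ the zeros of $v$ are $s=0$ and, if $s_0<+\infty$, $s=s_0$; these give the singular set as the images of $\Ga$ and of $\Ga_0(\eps):=F(\eps,s_0)$, and only $\Ga(\rr)$ when $s_0=+\infty$.

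The heart of the proof is to identify $\Ga_0$ as a CC-geodesic of curvature $\mu$, together with (iii). Since $v(s_0)=0$, \eqref{eq:cmulavtheta2} gives $\dot{\Ga}_0(\eps)=V_\eps(s_0)=(v'(s_0)/2)\,J(\dot{\ga}_\eps(s_0))$, which is orthogonal to $\dot{\ga}_\eps(s_0)$; together with $\dot{\ga}_\eps(0)=J(\dot{\Ga}(\eps))\perp\dot{\Ga}(\eps)$ at $s=0$, this yields the orthogonality in (iii). To identify $\Ga_0$ I will differentiate $\dot{\Ga}_0$ covariantly. Writing $W:=\dot{\ga}_\eps(s_0)$ and using the commutation $[\dot{\ga}_\eps,V_\eps]=0$ from Lemma~\ref{lem:ccjacobi} (i) (which gives $D_{\dot{\Ga}_0}W=V'_\eps(s_0)$), the Sasakian identities \eqref{eq:dujv} and the geodesic equation \eqref{eq:geoeq}, a short computation yields $V'_\eps(s_0)=(v''(s_0)/2)\,J(W)+(v'(s_0)/2)\,T$ and then $D_{\dot{\Ga}_0}\dot{\Ga}_0=(v''(s_0)/2)\,J(\dot{\Ga}_0)$. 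At this point the two first integrals close the argument with no sign ambiguity: $R\equiv-4\mu$ gives $v''(s_0)=-4\mu$, while $P\equiv-4$ and $|\dot{\Ga}_0|=|v'(s_0)/2|$ give unit speed. Hence $\dot{\Ga}_0'+2\mu\,J(\dot{\Ga}_0)=0$ by \eqref{eq:geoeq}, so $\Ga_0$ is a CC-geodesic of curvature $\mu$. I expect this covariant computation, and the way the conserved quantities simultaneously pin the curvature and the unit speed, to be the main obstacle.

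Finally, for (iv) I will take the $C^\infty$ unit normal $N:=(v\,J(\dot{\ga}_\eps)-(v'/2)\,T)/\sqrt{v^2+(v'/2)^2}$, well defined by part (i). On $(0,s_0)$ one has $v<0$ (since $v'(0)=-2$ and $s_0$ is the first zero), so $N_h\neq0$ and \eqref{eq:nuh} gives $\nuh=-J(\dot{\ga}_\eps)$ and $Z=\dot{\ga}_\eps$; thus each $\ga_\eps|_{(0,s_0)}$ is a characteristic curve. The mean curvature then follows from \eqref{eq:mc2} and \eqref{eq:geoeq}, since $2H=\escpr{D_ZZ,\nuh}=\escpr{\dot{\ga}_\eps',\nuh}=\escpr{-2\la\,J(\dot{\ga}_\eps),-J(\dot{\ga}_\eps)}=2\la$, whence $H=\la$.
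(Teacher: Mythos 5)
Your proposal is correct, and its overall architecture coincides with the paper's: both rest on the expression \eqref{eq:cmulavtheta2} for the CC-Jacobi field $V_\eps$, the observation that $v_\eps=v$ is independent of $\eps$, a covariant differentiation of $V_\eps$ at $s=s_0$ to identify $\Ga_0$ as a CC-geodesic of curvature $\mu$, and the same unit normal for part (iv). The one genuine difference is how you extract the boundary values $v'(s_0)$ and $v''(s_0)$. The paper reads them off from the explicit three-case solution \eqref{eq:cmulav} of $v'''+\tau v'=0$, computing $v'(s_0)=2$ and then $v''(s_0)=-4\mu$ "from \eqref{eq:cmulav} and the definition of $s_0$". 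You instead use the two first integrals $P=2vv''-(v')^2+\tau v^2\equiv-4$ and $R=v''+\tau v\equiv-4\mu$, which give $(v'(s_0))^2=4$ and $v''(s_0)=-4\mu$ uniformly in $\tau$, with no case analysis; the same device also yields part (i) at one stroke (no simultaneous zero of $v$ and $v'$) where the paper checks $s=0$, $s=s_0$ and $(0,s_0)$ separately. This is a tidy refinement, and it is consistent with the paper's own later use of the conserved quantity $2vv''-(v')^2+\tau v^2$ in the proof of Theorem~\ref{th:cmulastability}. Note that your argument only pins down $|v'(s_0)|=2$, not its sign, but as you observe the sign cancels in $D_{\dot{\Ga}_0}\dot{\Ga}_0=(v''(s_0)/2)\,J(\dot{\Ga}_0)$ and is irrelevant for unit speed; if one wants the paper's cleaner identity $\dot{\Ga}_0(\eps)=J(\dot{\ga}_\eps(s_0))$, the sign $v'(s_0)=+2$ follows anyway from $v<0$ on $(0,s_0)$ and the definition of $s_0$ as the first positive zero. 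The remaining parts (orthogonality, the characteristic curves, and $H=\la$) match the paper's proof verbatim.
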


\begin{proof}
The map $F(\eps,s):=\ga_\eps(s)$ is $C^\infty$ since $\Ga$ is a $C^\infty$ curve and, for fixed $\la\in\rr$, the solutions of \eqref{eq:geoeq} depends differentiably on the initial data. Note that $(\ptl F/\ptl s)(\eps,s)=\dot{\ga}_\eps(s)$, which is a horizontal vector. Since $(\ptl F/\ptl\eps)(\eps,s)=V_\eps(s)$ and $v_\eps=v<0$ on $(0,s_0)$, we deduce from \eqref{eq:cmulavtheta2} that the restriction of $F$ to $\rr\times(0,s_0)$ is an immersion with empty singular set. On the other hand, we have $(\ptl F/\ptl s)(\eps,0)=J(\dot{\Ga}(\eps))$ and $(\ptl F/\ptl\eps)(\eps,0)=\dot{\Ga}(\eps)$. Hence $F$ is an immersion at any point $(\eps,0)$, and $\Ga(\eps)$ is a curve of singular points. Suppose $s_0<+\infty$. An easy computation from \eqref{eq:cmulav} gives $v'(s_0)=2$. By \eqref{eq:cmulavtheta2} we get $\dot{\Ga}_0(\eps)=V_\eps(s_0)=J(\dot{\ga}_\eps(s_0))$, which is a unit vector. Clearly $\escpr{\dot{\ga}_\eps(s_0),\dot{\Ga}_0(\eps)}=0$. This proves (i) and (iii).

The fact that $\dot{\Ga}_0(\eps)=J(\dot{\ga}_\eps(s_0))$ implies that $\Ga_0(\eps)$ is a horizontal curve parameterized by arc-length. It follows that $\escpr{\dot{\Ga}'_0,\dot{\Ga}_0}=0$ and $\escpr{\dot{\Ga}'_0,T}=0$ along $\Ga_0$. To prove that $\Ga_0$ is a CC-geodesic of curvature $\mu$ it suffices, by \eqref{eq:geoeq}, to see that the function $h_0:=\escpr{\dot{\Ga}'_0,J(\dot{\Ga}_0)}$ equals $-2\mu$ along $\Ga$. By taking covariant derivatives in equality $\escpr{\dot{\Ga}_0,J(\dot{\Ga}_0)}=0$ we get
\[
h_0(\eps)=\escpr{\dot{\Ga}_0(\eps),-J(\dot{\Ga}_0)'(\eps)}=\escpr{J(\dot{\ga}_\eps(s_0)),\frac{d}{d\eps}\bigg|_\eps\dot{\ga}_\eps(s_0)}=\escpr{J(\dot{\ga}_\eps(s_0)),V'_\eps(s_0)},
\]
since $D_{V_\eps}\dot{\ga}_\eps=D_{\dot{\ga}_\eps}V_\eps$ along $\ga_\eps$. Now, we compute $V_\eps'(s_0)$ from \eqref{eq:cmulavtheta2}. Since $v(s_0)=0$ and $v'(s_0)=2$, we obtain
\[
V_\eps'(s_0)=-(2\la)\,\dot{\ga}_\eps(s_0)+(v''(s_0)/2)\,J(\dot{\ga}_\eps(s_0))
+J(\dot{\ga}_\eps)'(s_0)+2\,T_{\ga_\eps(s_0)}.
\]
By substituting into the previous equation we conclude that
\[
h_0(\eps)=\frac{1}{2}\,v''(s_0)=-2\mu,
\]
where the second equality comes from \eqref{eq:cmulav} and the definition of $s_0$. This finishes the proof of (ii).

Now, we define the vector field
\begin{equation}
\label{eq:cmulanormal}
N(\eps,s):=\frac{v(s)\,J(\dot{\ga}_\eps(s))-(v'(s)/2)\,T}{\sqrt{v(s)^2+(v'(s)/2)^2}}.
\end{equation}
Clearly this is a unit normal along $\Sg_{\la}(\Ga)$. Since $v<0$ on $(0,s_0)$ then the associated vector fields $\nuh$ and $Z$ in \eqref{eq:nuh} are given by $
\nuh(\eps,s)=-J(\dot{\ga}_\eps(s))$ and $Z(\eps,s)=\dot{\ga}_\eps(s)$. Having in mind equations \eqref{eq:mc} and \eqref{eq:geoeq} we infer that $2H=\escpr{\dot{\ga}_\eps',\nuh}=-2\la\,\escpr{J(\dot{\ga}_\eps),\nuh}=2\la$. This completes the proof.
\end{proof}

Let $\Ga:\rr\to M$ be a CC-geodesic of curvature $\mu$ in a $3$-dimensional space form. In \cite[Ex.~4.14]{hr2} it is explained how to construct, for any number $\la\geq 0$, a complete, immersed, orientable surface $\cmula(\Ga)$ by matching together surfaces as in \eqref{eq:sglaGa} in a suitable way. A precise description of $\cmula(\Ga)$ in $\mm(\kappa)$ for $\kappa\geq 0$ is found in \cite[Sect.~6]{rr2} and \cite[Sect.~5.2]{hr1}. By Lemma~\ref{lem:singcurves} the surfaces $\cmula(\Ga)$ are $C^\infty$ off of the singular set, which can be parameterized by CC-geodesics of curvature $\mu$. Moreover, by the characterization result in \cite[Thm.~4.5]{hr2} the surfaces $\cmula(\Ga)$ are volume-preserving area-stationary with constant mean curvature $\la$. Indeed, in \cite[Thm.~4.13]{hr2} it is shown that these are the unique complete, connected, orientable, volume-preserving area-stationary $C^2$ surfaces in $M$ whose singular set contains a curve.

Next, we discuss the stability properties of $\cmula(\Ga)$. In general we cannot expect $\cmula(\Ga)$ to be strongly stable. For instance, in \cite[Thm.~5.4]{hrr} it is proved that the minimal helicoids $\mathcal{C}_{0}(\Ga)$ with $\mu\neq 0$ in $\mm(0)$ are not strongly stable. We remark that the variation employed to obtain this result \emph{moves the two singular curves} of $\mathcal{C}_{0}(\Ga)$ . This motivates the following question: is the regular set of $\cmula(\Ga)$ a strongly stable surface? Surprisingly, this question has a positive answer.

\begin{theorem}
\label{th:cmulastability}
Let $M$ be a $3$-dimensional space form. Then, the regular set of any surface $\cmula(\Ga)$ is strictly stable.
\end{theorem}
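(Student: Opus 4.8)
The plan is to verify the hypothesis of Corollary~\ref{cor:criteria} (iii) on the regular set, namely that $\ele(\mnh)<0$; this at once gives strict stability. Since $\cmula(\Ga)$ is assembled by matching the fundamental pieces $\Sg_\la(\Ga)$ of \eqref{eq:sglaGa}, and $\ele$ is a local operator, it suffices to establish the strict inequality on the regular set of a single such piece.

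The required computation is the one carried out in the proof of Theorem~\ref{th:main1} (v), now for an arbitrary space form. First I would re-derive $\ele(\mnh)$ from \eqref{eq:lu} using \eqref{eq:znt}, \eqref{eq:zmnh}, the expression for $Z(\escpr{B(Z),S})$ in \cite[Lem.~5.5]{rosales}, and the identity $\mnh^2+\escpr{N,T}^2=1$, obtaining the space-form analogue of \eqref{eq:stable1},
\[
\ele(\mnh)=4\,\mnh^{-2}\,\escpr{B(Z),S}+4\,(\la^2+\kappa-1).
\]
By Lemma~\ref{lem:singcurves} each $\ga_\eps$ is a characteristic curve of $\Sg_\la(\Ga)$, and from the normal \eqref{eq:cmulanormal} one has $\escpr{N,T}=-v'_\eps/\sqrt{4v_\eps^2+(v'_\eps)^2}$ and $\mnh=-2v_\eps/\sqrt{4v_\eps^2+(v'_\eps)^2}$. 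Computing $\escpr{B(Z),S}$ from \eqref{eq:znt} exactly as in Theorem~\ref{th:main1} (v) and substituting, I would arrive, with $\tau:=4(\la^2+\kappa)$, at
\[
v_\eps^2\,\ele(\mnh)=2\,v_\eps\,v''_\eps-(v'_\eps)^2+\tau\,v_\eps^2.
\]
Differentiating the right-hand side in $s$ and invoking the CC-Jacobi equation $v'''_\eps+\tau\,v'_\eps=0$ from Lemma~\ref{lem:ccjacobi} (iii) shows it is constant along $\ga_\eps$. Evaluating at $s=0$, where by \eqref{eq:cmulav} the piece satisfies $v_\eps(0)=0$ and $v'_\eps(0)=-2$, this constant equals $-(v'_\eps(0))^2=-4$. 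Hence $\ele(\mnh)=-4/v_\eps^2<0$ throughout the regular set, where $v_\eps\neq 0$, and Corollary~\ref{cor:criteria} (iii) yields strict stability.

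What makes the result striking---as anticipated in the Introduction---is the comparison with the family $\Sg_{\la,\sg}$. There the generating geodesics issue from a vertical line, so $v_\eps(0)=1$, $v'_\eps(0)=0$, and the corresponding constant is $2\,v''_\eps(0)+\tau$, whose sign is governed by $\theta'$ and need not be negative. Here, by contrast, the geodesics leave orthogonally from the singular curve $\Ga$, forcing $v_\eps(0)=0$ and $v'_\eps(0)=-2$; this single initial condition pins the constant to $-4$ irrespective of $\la$, $\mu$ and $\kappa$. The only point demanding care---and the main thing I would check---is that the displayed local expression for $\ele(\mnh)$ is valid on every fundamental piece $\Sg_\la(\Ga)$, so that Corollary~\ref{cor:criteria} (iii) may indeed be applied across the entire regular set of $\cmula(\Ga)$.
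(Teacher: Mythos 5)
Your proposal is correct and follows essentially the same route as the paper: reduce via Corollary~\ref{cor:criteria}~(iii) to showing $\ele(\mnh)<0$ on a fundamental piece $\Sg_\la(\Ga)$, adapt the computation from Theorem~\ref{th:main1}~(v) to obtain $v_\eps^2\,\ele(\mnh)=2\,v_\eps\,v''_\eps-(v'_\eps)^2+\tau\,v_\eps^2$, observe this is constant by the CC-Jacobi equation, and evaluate at $s=0$ using $v_\eps(0)=0$, $v'_\eps(0)=-2$ to get the constant $-4$. The only difference is presentational: you spell out the sign conventions coming from the normal \eqref{eq:cmulanormal} and the comparison with the initial conditions of the $\Sg_{\la,\sg}$ family, which the paper leaves implicit.
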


\begin{proof}
By Corollary~\ref{cor:criteria} (iii) and the definition of $\cmula(\Ga)$, it suffices to check that $\ele(\mnh)<0$ along the regular set of a surface $\Sg_\la(\Ga)$ as in \eqref{eq:sglaGa}.

Consider the unit normal $N$ along $\Sg_\la(\Ga)$ defined in \eqref{eq:cmulanormal}. Following the computations employed to prove Theorem~\ref{th:main1} (v), we can show the equality
\[
v^2\,\ele(\mnh)=2\,v\,v''-(v')^2+\tau\,v^2,
\]
where $\tau:=4\,(\la^2+\kappa)$ and $\kappa$ is the Webster scalar curvature of $M$. By differentiating with respect to $s$ at the right hand side above, and taking into account that $v'''+\tau v'=0$, we get that $2v v''-(v')^2+\tau v^2$ is constant. Since $v(0)=0$ and $v'(0)=2$ we conclude that $v^2\,\ele(\mnh)=-4$ along the regular set of $\Sg_\la(\Ga)$. This completes the proof.
\end{proof}

\begin{remarks}
1. In general, the stability criteria in Remark~\ref{re:trivial} and Corollary~\ref{cor:criteria} (ii) fail for $\cmula(\Ga)$. For instance, the computations in \cite[Sect.~5]{hrr} show that the helicoids $\mathcal{C}_{0}(\Ga)$ in $\mm(0)$ have vertical points and do not satisfy the inequality $|B(Z)+S|^2-4\,\mnh^2\leq 0$.

2. Theorem~\ref{th:cmulastability} illustrates that, as happens for the helicoids $\mathcal{C}_{0}(\Ga)$ in $\mm(0)$, in order to prove instability of $\cmula(\Ga)$ we must employ deformations moving the singular set. As a technical difficulty this requires to compute a second variation formula for such variations.

3. In \cite[Cor.~6.11]{montefalcone2}, Montefalcone used a stability criterion similar to Corollary~\ref{cor:criteria} (ii) to provide an example of a complete minimal hypersurface in the Heisenberg group $\mathbb{H}^n$ with singular set of Hausdorff dimension $n$ and strictly stable regular set. On the other hand, Ritor\'e~\cite{r2} found examples of area-minimizing surfaces with low Euclidean regularity in $\mm(0)$ whose singular set consists of a straight line or several half-lines meeting at a point.
\end{remarks}

\providecommand{\bysame}{\leavevmode\hbox
to3em{\hrulefill}\thinspace}
\providecommand{\MR}{\relax\ifhmode\unskip\space\fi MR }
\providecommand{\MRhref}[2]{%
  \href{http://www.ams.org/mathscinet-getitem?mr=#1}{#2}
} \providecommand{\href}[2]{#2}

\end{document}